\numberwithin{equation}{section}
\newtheorem{thm}{Theorem}[section]
\newtheorem{lem}{Lemma}[section]
\newtheorem{prop}{Proposition}[section]
\newtheorem{cor}{Corollary}[section]
\begin{document}
\title[Elliptic System Flow]{Long Time Existence of A Flow of Elliptic Systems } \subjclass{35J60, 35J47}
\keywords{Liouville system, Entropy formula, Toda system. Flow, asymptotic analysis, a priori estimate, classification of solutions, blowup phenomenon, Long time existence, Parabolic system, elliptic systems}

\author{Woongbae Park} 
\address{Department of Mathematics\\
University of Rochester\\
803 Hylan Building\\
Rochester, NY 14627}
\email{wpark14@ur.rochester.edu}

\author{Lei Zhang}\footnote{Lei Zhang is partially supported by Simons Foundation Grant SFI-MPS-TSM-00013752}
\address{Department of Mathematics\\
        University of Florida\\
        358 Little Hall P.O.Box 118105\\
        Gainesville FL 32611-8105}
\email{leizhang@ufl.edu}

\date{\today}

\begin{abstract}  For elliptic systems defined on Riemann surfaces, Liouville and Toda systems represent two well-known classes exhibiting drastically different solution structures. Over the years, existence results for these systems have highlighted discrepancies due to their unique solution structures. In this work, we aim to construct a monotone entropy form and establish the long-term existence of a flow of parabolic systems. As a result of our main theorem, we can prove existence results for some broad classes of elliptic systems, including both Liouville and Toda systems. The strength of our results is further underscored by the fact that no topological information about the Riemann surfaces is required and no positive lower bound of coefficient functions is postulated. 
 \end{abstract}

\maketitle

\numberwithin{equation}{section}
\allowdisplaybreaks

\section{Introduction}\label{sec1}
In this article we aim to study a broad class of second order elliptic system defined on a Riemann surface. Let $(M,g)$ be a Riemann surface with metric $g$, in this article we consider
\begin{equation} \label{Liouville}
\Delta u_i + \sum_{j=1}^{n} a_{ij} \left( \frac{h_j e^{u_j}}{\int h_j e^{u_j}} - 1, \right)= 0, \quad i=1,..,n
\end{equation}
where $\Delta$ is the Laplace-Beltrami operator ($-\Delta \ge 0)$, $h_1(x),...,h_n(x)$   are non-negative continuous functions not identically equal to zero, $A=(a_{ij})_{n\times n}$ is a constant matrix to be specified under different contexts later. The volume of $(M,g)$ is assumed to be $1$ for simplicity. Here we just mention that if all $a_{ij}$ are non-negative, the system (\ref{Liouville}) is called a Liouville system, if $A$ comes from some specific  Lie group, for example, if $A$ is the following Cartan matrix:
\[A=\left(\begin{array}{ccccc}
2 & -1 & 0 & ... & 0\\
-1 & 2 & -1 & ... & 0\\
\vdots & \vdots & \vdots & ... & \vdots \\
0 & 0 & 0 & ... & 2
\end{array}
\right),\]
system (\ref{Liouville}) is called a Toda system. 
Both
Liouville systems and Toda Systems have significant applications across various fields. In geometry, when either system reduces to a single equation ($n=1$), it generalizes the renowned Nirenberg problem, which has been extensively researched over the past few decades. In physics, Liouville systems emerge from the mean field limit of point vortexes in the Euler flow (see \cite{biler,wolansky1,wolansky2,yang}) and are intricately linked to self-dual condensate solutions of the Abelian Chern-Simons model with $N$ Higgs particles \cite{kimleelee,Wil}. In biology, they appear in the stationary solutions of the multi-species Patlak-Keller-Segel system \cite{wolansky3} and are important for studying chemotaxis \cite{childress}.   Toda system is a completely integrable system that is used in various fields including solid-state physics, mathematical physics, and even in the study of integrable systems and cluster algebras, etc (see \cite{takasaki,lin-wei-ye}).

Even though Liouville systems and Toda systems are both described by (\ref{Liouville}) with different coefficient matrices, they have drastically different structures of solutions. For example, Toda systems have discrete total integrals for global solutions \cite{lin-wei-ye} but Liouville systems have a continuum of energy (here we use energy to describe the total integration of global solutions). Solutions of Toda systems usually don't have radial symmetry, but global solutions of Liouville systems are radially symmetric in many cases \cite{CSW, lin-zhang-1}. Because of all these stark comparisons, there is barely any work that proves results for both of them. In this article we initiate a new approach to attack second order elliptic systems in general. By our innovative scheme we found we can combine both aforementioned systems in our new results and prove some existence results for a large class of elliptic systems.

Our assumption on the coefficient matrix $A$ is:
\begin{equation}\label{A-a-2}
A \mbox{ is symmetric, positive definite and the largest eigenvalue }<  8\pi.
\end{equation}
For the coefficient functions $h_i$ ($i=1,...,n$) we assume that 
\begin{equation}\label{h-a}
   h_i\ge 0,\quad \|h_i\|_{C^1(M)}<\infty,\quad h_i\not \equiv 0,\quad i=1,..,n.
\end{equation}
Under (\ref{A-a-2}) and (\ref{h-a}) we consider the following parabolic system:
\begin{equation} \label{Liouville system flow}
\left\{\begin{array}{ll}\partial_t u_i = \Delta u_i + \sum_{j=1}^{n} a_{ij} \left( \frac{h_j e^{u_j}}{\int h_j e^{u_j}} - 1 \right)\\
u_i(x,0)=u_{i,0}(x)\in C^{\infty}(M),\quad i=1,..,n,
\end{array}
\right.
\end{equation}
for $i=1, \ldots, n$ where we use $u_0(x)=(u_{1,0}(x),...,u_{n,0}(x))$ to denote the initial smooth function. 

Our main theorem is 
\begin{thm}\label{main-thm-1}
Let $A$ satisfy (\ref{A-a-2}), $h_1,...,h_n$ satisfy (\ref{h-a}) and $u_0$ be a smooth function on $M$, then (\ref{Liouville system flow}) has a unique global solution $u$ in $C([0,\infty),W^{1,2}(M))\cap C^{\infty}(M\times (0,\infty))$.
\end{thm}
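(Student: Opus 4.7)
The plan is to exhibit \eqref{Liouville system flow} as a gradient flow of an entropy $J$ in the metric induced by $A$, and then combine the resulting monotonicity with a Moser--Trudinger lower bound made uniform by the spectral assumption $\lambda_{\max}(A)<8\pi$. Short-time existence and uniqueness of a smooth solution on a maximal interval $[0,T^*)$ would follow from standard semilinear parabolic theory: the nonlinearity $h_j e^{u_j}/\int h_j e^{u_j}$ is locally Lipschitz on $W^{1,2}(M)$ via Moser--Trudinger, so a Banach fixed-point argument in $C([0,T],W^{1,2}(M))$ plus parabolic Schauder yields $u\in C^\infty(M\times(0,T^*))$. Integrating \eqref{Liouville system flow} over $M$ gives $\bar u_i(t)\equiv\bar u_{i,0}$, so it suffices to produce a uniform a priori bound $\|u_i(\cdot,t)\|_{W^{1,2}(M)}\le C$ on $[0,T^*)$.

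Letting $(a^{ij})=A^{-1}$, set
\[
J(u) \;=\; \frac{1}{2}\sum_{i,j}a^{ij}\int_M\nabla u_i\cdot\nabla u_j\,dV \;+\; \sum_i\int_M u_i\,dV \;-\; \sum_i\log\int_M h_i e^{u_i}\,dV.
\]
A direct variational computation gives $\frac{\delta J}{\delta u_k}=-\sum_j a^{kj}\Delta u_j+1-\frac{h_k e^{u_k}}{\int h_k e^{u_k}}$, and multiplying by $a_{ik}$ and summing shows $\partial_t u_i=-\sum_k a_{ik}\frac{\delta J}{\delta u_k}$. Thus \eqref{Liouville system flow} is the $A$-gradient flow of $J$, and since $A$ is positive definite,
\[
\frac{d}{dt}J(u(t)) \;=\; -\int_M\sum_{i,k}a_{ik}\frac{\delta J}{\delta u_i}\frac{\delta J}{\delta u_k}\,dV \;\le\; 0,
\]
so $J(u(t))\le J(u_0)$ throughout $[0,T^*)$.

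To convert monotonicity into an a priori estimate, I invoke the Moser--Trudinger inequality on $(M,g)$: $\log\int_M e^v\,dV \le \bar v + \frac{1}{16\pi}\|\nabla v\|_{L^2}^2 + C$. Together with $h_i\in L^\infty$ this yields $\log\int_M h_i e^{u_i}\,dV\le\bar u_i+\frac{1}{16\pi}\|\nabla u_i\|_{L^2}^2+C_i$, so substituting into $J$ and noting that the hypothesis $\lambda_{\max}(A)<8\pi$ is equivalent to the strict positive definiteness of $\frac{1}{2}A^{-1}-\frac{1}{16\pi}I$, I obtain
\[
J(u(t)) \;\ge\; \delta\sum_i\|\nabla u_i(t)\|_{L^2}^2 \;-\; C
\]
with $\delta=\frac{1}{2\lambda_{\max}(A)}-\frac{1}{16\pi}>0$. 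Combined with $J(u(t))\le J(u_0)$ and the conservation of the averages $\bar u_i$, this produces a uniform bound $\|u_i(\cdot,t)\|_{W^{1,2}(M)}\le K$ on $[0,T^*)$.

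With the $W^{1,2}$ bound in hand, Moser--Trudinger again gives $e^{u_i(\cdot,t)}\in L^p(M)$ uniformly in $t$ for every $p<\infty$, so the right-hand side of \eqref{Liouville system flow} lies in $L^p(M\times[0,T])$ for each finite $T$. Parabolic $L^p$ theory followed by Schauder bootstrap then yields uniform $C^{k,\alpha}$ bounds, precluding finite-time blow-up and giving $T^*=\infty$ with $u\in C^\infty(M\times(0,\infty))$; uniqueness is inherited from the fixed-point argument at each step. The main obstacle is securing $\delta>0$: the Moser--Trudinger constant $\frac{1}{16\pi}$ matches exactly the borderline case $\lambda_{\max}(A)=8\pi$, so the strict inequality in \eqref{A-a-2} is precisely what saves the entire scheme. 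Correspondingly, only an upper (not a positive lower) bound on each $h_i$ enters both the entropy estimate and the bootstrap, explaining the mildness of the hypotheses on the coefficient functions.
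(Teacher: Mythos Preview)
Your argument is correct and follows the same backbone as the paper: the same entropy functional (your $J$ differs from the paper's $K$ only by the conserved constant $\sum_i\bar u_i$), the same monotonicity computation, and the same Moser--Trudinger coercivity that turns the spectral hypothesis $\lambda_{\max}(A)<8\pi$ into a uniform $W^{1,2}$ bound. Short-time existence and uniqueness are also handled by a fixed-point argument in both accounts.

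Where you diverge is in the passage from the uniform $W^{1,2}$ bound to global existence. You observe that Moser--Trudinger gives $\sup_t\|e^{u_i(t)}\|_{L^p(M)}<\infty$ for every $p$, hence $f_i\in L^p(M_{T})$ for any finite $T$, and then invoke parabolic $L^p$ estimates plus Schauder to extend past any putative finite blow-up time. This is a legitimate and shorter route to Theorem~\ref{main-thm-1} as stated. The paper instead carries out a full Moser iteration (its Theorem~4.1) to obtain an $L^\infty$ bound on $u_i$ that is \emph{uniform for all $t\in[0,\infty)$}, combined with a ``good time'' lemma (its Lemma~4.1) selecting $t_T$ where $\|\partial_t u(t_T)\|_{L^2}$ is small. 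Your bounds, by contrast, grow with $T$; this is harmless for ruling out finite-time blow-up, but the paper's uniform-in-$t$ control is what one needs to extract a convergent subsequence as $t\to\infty$ and deduce Corollary~\ref{main-thm-2}. So your shortcut proves the theorem but does not by itself deliver the stationary-solution corollary, whereas the paper's longer argument does both at once.
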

The notation $u \in C([0,\infty),W^{1,2}(M)) \cap C^{\infty}(M \times (0,\infty))$ means for each $t\in [0,\infty)$, $u(t)\in W^{1,2}(M)$ and $\|u(t)\|_{W^{1,2}(M)}$ is continuous in $t$ and is $C^{\infty}$ in $(0,\infty)\times M$.

As a corollary of Theorem \ref{main-thm-1} we have the following existence result:
 \begin{cor}\label{main-thm-2}
Let $A$ satisfy (\ref{A-a-2}) and $h_1,...,h_n$ satisfy (\ref{h-a}), then (\ref{Liouville}) has a solution. 
\end{cor}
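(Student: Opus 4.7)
The plan is to extract a solution of the stationary system (\ref{Liouville}) as a subsequential $t\to\infty$ limit of the flow furnished by Theorem \ref{main-thm-1}. Pick any convenient smooth initial datum $u_0$ (for instance $u_0\equiv 0$) and let $u(\cdot,t)$ be the unique global solution provided by Theorem \ref{main-thm-1}. Integrating each equation of (\ref{Liouville system flow}) over $M$, and using $\int_M \Delta u_i\,dV_g=0$, $|M|=1$, and $\int_M h_j e^{u_j}/\int_M h_j e^{u_j}\,dV_g=1$, one sees that the means $\bar u_i(t):=\int_M u_i(\cdot,t)\,dV_g$ are conserved. I therefore pass to the mean-zero shift $v_i(t):=u_i(t)-\bar u_i(0)$, which satisfies the same flow equation because the nonlinearity is invariant under adding constants to each component.

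Next I would exploit the monotone entropy $J(u)$ whose construction is the core of the paper and drives Theorem \ref{main-thm-1}. Its monotonicity along the flow, combined with its gradient-flow-like structure, should give an estimate of the form $\tfrac{d}{dt}J(u(t))\le -c\,\|\partial_t u(\cdot,t)\|_{L^2(M)}^2$, so that after integrating in $t$
\[
\int_0^\infty \|\partial_t u(\cdot,t)\|_{L^2(M)}^2\,dt \;\le\; \frac{J(u_0)-\inf J}{c} \;<\;\infty,
\]
which produces a sequence $t_k\to\infty$ with $\|\partial_t u(\cdot,t_k)\|_{L^2(M)}\to 0$. On the other hand, the spectral hypothesis (\ref{A-a-2}) that $A$ is symmetric positive definite with largest eigenvalue strictly less than $8\pi$ is precisely the threshold under which a Shafrir--Wolansky type Moser--Trudinger inequality for symmetric positive definite systems makes $J$ coercive on the subspace of mean-zero functions. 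Coercivity plus monotonicity $J(u(t))\le J(u_0)$ yields a uniform bound $\sup_{t\ge 0}\|v(\cdot,t)\|_{W^{1,2}(M)}\le C$.

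To conclude, I would bootstrap along the sequence $t_k$. The $W^{1,2}$ bound together with Moser--Trudinger shows that each $h_j e^{v_j(\cdot,t_k)}$ is bounded in $L^p(M)$ for every $p<\infty$, and that the denominators $\int h_j e^{v_j(\cdot,t_k)}\,dV_g$ stay bounded away from $0$ and $\infty$. Rewriting (\ref{Liouville system flow}) at $t=t_k$ as
\[
\Delta v_i(\cdot,t_k) \;=\; \partial_t u_i(\cdot,t_k) \;-\; \sum_{j=1}^n a_{ij}\left(\frac{h_j e^{v_j(\cdot,t_k)}}{\int h_j e^{v_j(\cdot,t_k)}\,dV_g}-1\right),
\]
$L^p$ elliptic regularity upgrades convergence along a further subsequence to $C^{1,\alpha}(M)$. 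Passing to the limit produces $v_\infty\in C^{1,\alpha}(M)$ satisfying $\Delta v_{\infty,i}+\sum_j a_{ij}(h_j e^{v_{\infty,j}}/\int h_j e^{v_{\infty,j}}-1)=0$, and standard bootstrap then makes $v_\infty$ smooth; this is exactly a solution of (\ref{Liouville}).

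The main obstacle is the second step, namely having at hand a monotone entropy $J$ together with the matching coercivity. The system Moser--Trudinger inequality is \emph{sharp} at eigenvalue $8\pi$, so the strict inequality in (\ref{A-a-2}) is essential: without it, $v(\cdot,t)$ could drift to infinity and, since no topological information on $M$ is assumed, nothing else rescues compactness. A secondary technical point is making sure the subsequence on which $\partial_t u\to 0$ in $L^2$ can simultaneously be arranged so that the nonlinear $h_j e^{u_j}/\int h_j e^{u_j}$ terms converge strongly; this is handled precisely by the Moser--Trudinger/elliptic regularity bootstrap above.
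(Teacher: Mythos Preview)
Your proposal is correct and is exactly the argument the paper has in mind: the paper does not spell out a proof of Corollary~\ref{main-thm-2} beyond the line ``Theorem~\ref{main-thm-1} and Corollary~\ref{main-thm-2} follow immediately,'' but all the pieces you invoke are precisely the ones developed earlier --- the entropy $K$ with $K'(t)=-\sum_{i,j}a^{ij}\int_M \partial_t u_i\,\partial_t u_j\le -\lambda\sum_i\|\partial_t u_i\|_{L^2}^2$ (Lemma~\ref{lem K dec}), the coercivity bound $K(t)\ge c_0^{-1}\sum_j\|\nabla u_j\|_{L^2}^2-C_M$ (Lemma~\ref{lem lower bd of K} and \eqref{eq K lower bd}), the resulting finiteness of $\int_0^\infty\sum_i\|\partial_t u_i\|_{L^2}^2\,dt$ (used verbatim in the proof of Lemma~\ref{lem unif bound-T}), and the lower bound on $\int_M h_j e^{u_j}$ via H\"older \eqref{eq h e^v lower}.

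One small discrepancy worth noting: you attribute the coercivity to a ``Shafrir--Wolansky type system Moser--Trudinger inequality,'' but the paper does not use a system inequality. It applies the \emph{scalar} Moser--Trudinger bound (Lemma~\ref{lem Struwe}) to each component separately; the hypothesis $\lambda_{\max}(A)<8\pi$ is used only in the equivalent form $\lambda_{\min}(A^{-1})>1/(8\pi)$, so that the quadratic form $\tfrac12\sum a^{ij}\int\nabla u_i\nabla u_j$ dominates the sum of the scalar Moser--Trudinger costs $\tfrac{1}{4\beta}\sum_j\int|\nabla u_j|^2$ for some $\beta<4\pi$. This componentwise route is more elementary and does not require any off-diagonal structure of $A$, which is why the same argument covers both Liouville and Toda matrices.
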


Here we make a few remarks about Corollary \ref{main-thm-2}. Firstly if $A$ is a nonnegative matrix, the system is a Liouville system. Corollary \ref{main-thm-2} is the first existence theorem for Liouville systems that assumes the coefficient matrix to be positive definite. In comparison, the existence theorems of Lin-Zhang \cite{lin-zhang-2} and Gu-Zhang \cite{gu-zhang} require negative eigenvalues on $A$. Secondly, there is no requirement on the topology of the manifold $(M,g)$, while in previous results \cite{lin-zhang-1, lin-zhang-2, gu-zhang,gu-zhang-2}, the topology of the manifold is required to be nontrivial. Thirdly, the coefficient functions $h_i$ are not required to be bounded below by positive constants. No existence results or blowup analysis have appeared before with such weak assumptions. It is also clear that the assumption of $A$ in (\ref{A-a-2}) also includes all Toda systems with coefficient matrices as Cartan matrices $A_n$. In this sense Corollary \ref{main-thm-2} unifies the two drastically different elliptic systems. As far as we know before Corollary \ref{main-thm-2} there had never been a theorem that proves existence of solutions for both Liouvlle systems and Toda systems.

As mentioned before we normalize the volume $\int_{M} 1 = 1$ for simplicity. 
This implies that the solution of \eqref{Liouville system flow} satisfies $\int_{M} \partial_t u_i dx = 0$.
Therefore, $\int_{M} u_i$ is a constant.
We may assume $\int_{M} u_{i,0} = 0$, then we get
\[
\int_{M} u_i = 0.
\]
We denote $A^{-1} = (a^{ij})$ and $u^i = \sum_j a^{ij}u_j$.
Throughout this paper, we mainly write integral and derivatives with respect to $g$ unless otherwise specified. 

The organization of the article is as follows: In section two we list some preliminary tools for the proof of short and long time existence of the flow. In particular, Lemma \ref{lem Struwe}, which can be found in \cite{borer,struwe-2}, plays a crucial role in the proof of Theorem \ref{main-thm-1}. In section three we prove the short time existence by a fix point argument. Finally in section four we prove the long time existence by a carefully crafted Moser iteration.

\section{Preliminary}
\label{sec2}


We define entropy of \eqref{Liouville system flow} by
\begin{equation} \label{entropy positive}
\begin{split}
K(t) =& \frac{1}{2} \sum_{i,j} \int_{M} a^{ij} \nabla u_i \nabla u_j - \sum_{j}\log \left(\int_{M} h_j e^{u_j} \right) 
\end{split}
\end{equation}
where $a^{ij}$ are entries of $A^{-1}$. The following lemma gives the monotonicity of $K$:

\begin{lem} \label{lem K dec}
Let $(u_i)$ be a smooth solution of \eqref{Liouville system flow} on $M \times [0,T]$.
Also assume $A$ is positive definite.
Then the entropy $K(t)$ is non-increasing.
\end{lem}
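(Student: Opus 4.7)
The plan is to compute $\dot K(t)$ explicitly, use integration by parts together with the parabolic equation, and show that the result is a non-positive quadratic form in $\partial_t u$.

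First I would differentiate $K(t)$ term by term. The Dirichlet-type term contributes
\[
\frac{d}{dt}\,\frac12\sum_{i,j}\int_M a^{ij}\nabla u_i\cdot\nabla u_j
=\sum_{i,j}\int_M a^{ij}\nabla u_i\cdot\nabla(\partial_t u_j),
\]
using symmetry of $A^{-1}$ (which follows from the symmetry of $A$). Integration by parts on $M$ (no boundary, since $M$ is closed) rewrites this as $-\sum_{j}\int_M\bigl(\sum_i a^{ij}\Delta u_i\bigr)\partial_t u_j$. The log terms produce
\[
\frac{d}{dt}\sum_j\log\!\Bigl(\int_M h_j e^{u_j}\Bigr)=\sum_j\frac{\int_M h_j e^{u_j}\partial_t u_j}{\int_M h_j e^{u_j}}.
\]

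Next I would substitute $\Delta u_i$ from the PDE, $\Delta u_i=\partial_t u_i-\sum_k a_{ik}\!\left(\frac{h_k e^{u_k}}{\int h_k e^{u_k}}-1\right)$, into the expression $\sum_i a^{ij}\Delta u_i$. The identity $\sum_i a^{ij}a_{ik}=\delta_{jk}$ collapses the double sum to a single term, yielding
\[
\sum_i a^{ij}\Delta u_i=\sum_i a^{ij}\partial_t u_i-\Bigl(\frac{h_j e^{u_j}}{\int h_j e^{u_j}}-1\Bigr).
\]
Multiplying by $\partial_t u_j$, integrating, and summing gives two pieces: a quadratic form $\sum_{i,j}\int_M a^{ij}(\partial_t u_i)(\partial_t u_j)$, and a term $-\sum_j\int_M\!\left(\tfrac{h_j e^{u_j}}{\int h_j e^{u_j}}-1\right)\partial_t u_j$. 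The subtracted term splits into $-\sum_j\frac{\int_M h_je^{u_j}\partial_t u_j}{\int_M h_je^{u_j}}+\sum_j\int_M\partial_t u_j$; the second sum vanishes since integrating the PDE against the normalized volume gives $\int_M\partial_t u_j=\sum_k a_{jk}(1-1)=0$.

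Assembling the pieces, the log-derivative term cancels exactly against the quantity just produced, and one is left with
\[
\dot K(t)=-\sum_{i,j}\int_M a^{ij}(\partial_t u_i)(\partial_t u_j).
\]
Since $A$ is positive definite, so is $A^{-1}=(a^{ij})$, hence the integrand is pointwise non-negative and $\dot K(t)\le 0$, as desired. The only subtle step is the bookkeeping in the third display: carefully matching the log-term time derivative with the ``linear'' piece arising after substituting the PDE, and using the mean-value identity $\int_M\partial_t u_j=0$; the rest is routine differentiation, integration by parts, and linear algebra.
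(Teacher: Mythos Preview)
Your proof is correct and follows essentially the same route as the paper: differentiate $K$, integrate by parts, substitute the PDE, use $\sum_i a^{ij}a_{ik}=\delta_{jk}$ and $\int_M\partial_t u_j=0$ to arrive at $\dot K(t)=-\sum_{i,j}\int_M a^{ij}\partial_t u_i\,\partial_t u_j\le 0$. You are simply more explicit about the intermediate bookkeeping than the paper's three-line computation.
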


\begin{proof}
From the equation, we obtain that
\[
\begin{split}
K'(t) =& \sum_{i,j} \int_{M} a^{ij} \nabla u_i \partial_t (\nabla u_j) - \sum_{k} \int_{M} \frac{h_k e^{u_k}}{\int h_k e^{u_k}} \partial_t u_k \\
=& -\sum_{i,j} \int_{M} a^{ij} \left( \Delta u_i + \sum_{k} a_{ik} \left( \frac{h_k e^{u_k}}{\int h_k e^{u_k}} - 1 \right) \right) \partial_t u_j\\
=& - \sum_{i,j} \int_{M} a^{ij} \partial_t u_i \partial_t u_j \leq0
\end{split}
\]
if $A$ is positive definite. Lemma \ref{lem K dec} is established. 
\end{proof}

The following theorem provides an estimate for $\int_M e^{u^2}$.
And using this, we can obtain an estimate for $\int_M e^u$.
See for example Borer-Elbau-Weth \cite{borer} Lemma 2.1 or Struwe \cite{struwe-2} Theorem 2.2.

\begin{thm} \label{thm Struwe}
    Let $M$ be a closed and orientable surface.
    Then for any $\beta < 4\pi$,
    \[
    C_{TM}(\beta) := \sup \left\{\int_{M} e^{u^2} ; u \in W^{1,2}(M), \|\nabla u\|_{L^2}^2 \leq \beta, \bar{u} = 0 \right\} < \infty.
    \]
\end{thm}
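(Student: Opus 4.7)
The plan is to reduce Theorem \ref{thm Struwe} to the classical Moser--Trudinger inequality on a Euclidean disk (that is, for $v\in W^{1,2}_0(D)$ with $\|\nabla v\|_{L^2}^2\le\alpha<4\pi$ one has $\int_D e^{v^2}\le C(\alpha)$), using isothermal charts, a partition of unity, and a low/high-frequency splitting of $u$. The zero-mean hypothesis will be used twice, once through the Poincar\'e inequality and once through the spectral gap of $-\Delta_g$.

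\textbf{Step 1 (conformal cover).} Cover $M$ by finitely many conformal charts $(U_k,\varphi_k)_{k=1}^N$ with $\varphi_k:U_k\to D\subset\mathbb{R}^2$ and pulled-back metric $e^{2\psi_k}(dx_1^2+dx_2^2)$. Because the Dirichlet energy is conformally invariant in two dimensions, $\int_{U_k}|\nabla f|_g^2\,dV_g=\int_D|\nabla(f\circ\varphi_k^{-1})|^2\,dx$, so gradient norms transfer cleanly between $M$ and $\mathbb{R}^2$.

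\textbf{Step 2 (spectral split).} Decompose $u=u_L+u_H$, where $u_L$ is the orthogonal projection of $u$ onto the span of the first $N_0$ eigenfunctions of $-\Delta_g$ with positive eigenvalues. Since $\bar u=0$, Poincar\'e yields $\|u\|_{L^2(M)}\le C_P\sqrt\beta$; hence $u_L$ lies in a fixed finite-dimensional space and
\[
\|u_L\|_{L^\infty(M)}\le C(N_0)\|u_L\|_{L^2}\le C(N_0)\,C_P\sqrt\beta =: C_0,
\]
while $\|\nabla u_H\|_{L^2}^2\le\beta$ and $\|u_H\|_{L^2}^2\le\lambda_{N_0+1}^{-1}\beta$, which is arbitrarily small for large $N_0$. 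For any $\delta>0$, $u^2\le(1+\delta)u_H^2+(1+\delta^{-1})C_0^2$, so
\[
\int_M e^{u^2}\le e^{(1+\delta^{-1})C_0^2}\int_M e^{(1+\delta)u_H^2},
\]
and it suffices to bound the last integral uniformly in $u$.

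\textbf{Step 3 (localize and apply Moser).} Pick $\chi_k\in C_c^\infty(U_k)$ with $\chi_k\equiv 1$ on open sets $V_k\subset U_k$ that cover $M$. Since $\chi_k u_H=u_H$ on $V_k$,
\[
\int_M e^{(1+\delta)u_H^2}\le\sum_{k=1}^N\int_{U_k}e^{(1+\delta)(\chi_k u_H)^2}\,dV_g,
\]
and after pulling back by $\varphi_k^{-1}$ each summand becomes $\int_D e^{(1+\delta)W_k^2}\,dx$ (up to a uniform factor from $e^{2\psi_k}$), where $W_k=(\chi_k u_H)\circ\varphi_k^{-1}\in W^{1,2}_0(D)$. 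The product rule plus conformal invariance give
\[
\|\nabla W_k\|_{L^2(D)}^2=\|\nabla(\chi_k u_H)\|_{L^2(M)}^2\le(1+\delta)\|\nabla u_H\|_{L^2}^2+C_\delta\|\nabla\chi_k\|_\infty^2\|u_H\|_{L^2}^2\le(1+\delta)\beta+o_{N_0}(1).
\]
Choose $N_0$ large and $\delta$ small so that $(1+\delta)^2\beta<4\pi$; the classical Moser inequality for $W^{1,2}_0(D)$ then delivers a uniform bound on $\int_D e^{(1+\delta)W_k^2}\,dx$, and summing over $k$ closes the argument.

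\textbf{Main obstacle.} The delicate point is juggling the three small parameters---the spectral cut $N_0$, the exponent loss $\delta$, and the fineness of the isothermal cover---so that after every quadratic and cutoff inflation the effective gradient constant on each $W_k$ stays strictly below the critical Moser threshold $4\pi$. Without the zero-mean condition the low-frequency piece $u_L$ could be unbounded in $L^\infty$ and the absorption step would fail; the hypothesis $\beta<4\pi$ is precisely what leaves room for these losses.
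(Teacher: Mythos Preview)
The paper does not prove Theorem~\ref{thm Struwe}; it simply quotes the result with references to Borer--Elbau--Weth \cite{borer} (Lemma~2.1) and Struwe \cite{struwe-2} (Theorem~2.2). There is therefore no ``paper's own proof'' to compare against.

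Your argument is a correct, essentially self-contained reduction to the classical Moser--Trudinger inequality on the Euclidean disk. The spectral split uses $\bar u=0$ twice in the right way: Poincar\'e gives $\|u\|_{L^2}\le C_P\sqrt\beta$, so the finite-dimensional low-frequency piece $u_L$ is uniformly bounded in $L^\infty$, while the spectral gap makes $\|u_H\|_{L^2}^2\le\lambda_{N_0+1}^{-1}\beta$ as small as desired. The localization step is clean because Dirichlet energy is conformally invariant in dimension two, so passing to $W_k\in W^{1,2}_0(D)$ costs nothing on the gradient side and only a bounded factor $e^{2\psi_k}$ on the volume side. The parameter management is consistent: first pick $\delta$ (depending only on $\beta$) so that $(1+\delta)^2\beta<4\pi$, then pick $N_0$ large enough to absorb the cutoff error $C_\delta\|\nabla\chi_k\|_\infty^2\|u_H\|_{L^2}^2$; the prefactor $e^{(1+\delta^{-1})C_0^2}$ then depends on $N_0$ and $\delta$, hence only on $\beta$ and $M$, which is exactly what the statement requires. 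One cosmetic point: you use the same symbol $\delta$ for the Young-inequality loss in Step~2 and the product-rule loss in Step~3, which is harmless since taking them equal still leaves $(1+\delta)^2\beta<4\pi$ achievable.
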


Using Young's inequality
\[
2 |p (u-\bar{u}) | \leq \frac{\beta (u-\bar{u})^2}{\|\nabla u\|_{L^2}^2} + \frac{p^2}{\beta} \|\nabla u\|_{L^2}^2
\]
we can conclude the following lemma.

\begin{lem} \label{lem Struwe}
For any $u \in W^{1,2}(M)$ and for any $p \in \mathbb{R}$ and $\beta < 4\pi$,
\[
\frac{p^2}{\beta} \int_{M}  |\nabla u|^2  \geq \log \left( \frac{1}{C_{TM}(\beta)} \int_{M} e^{2p(u-\bar{u})}\right)
\]
where $C_{TM}(\beta) < \infty$ is a positive constant.
\end{lem}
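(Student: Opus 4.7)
The roadmap is essentially handed to us by the paper: apply the displayed Young-type inequality pointwise, exponentiate, integrate, and feed the result into Theorem \ref{thm Struwe}. The only thing to verify is that the normalization matches the hypothesis of Theorem \ref{thm Struwe}.

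\medskip

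My first step would be to rewrite the Young inequality shown just above the lemma. Setting $a = \sqrt{\beta}(u-\bar u)/\|\nabla u\|_{L^2}$ and $b = p\|\nabla u\|_{L^2}/\sqrt{\beta}$, the elementary bound $2ab \leq a^2+b^2$ gives exactly
\[
2p(u-\bar{u}) \;\leq\; 2|p(u-\bar{u})| \;\leq\; \frac{\beta (u-\bar{u})^2}{\|\nabla u\|_{L^2}^2} + \frac{p^2}{\beta} \|\nabla u\|_{L^2}^2.
\]
(One should note the harmless edge case $\|\nabla u\|_{L^2}=0$, where $u$ is constant, $u-\bar u \equiv 0$, and the inequality to be proved is trivial.)

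\medskip

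Next I would exponentiate pointwise and integrate over $M$. Since the last term on the right is a constant in $x$, it pulls out as a multiplicative factor:
\[
\int_M e^{2p(u-\bar u)} \;\leq\; \exp\!\left(\frac{p^2}{\beta}\|\nabla u\|_{L^2}^2\right)\int_M \exp\!\left(\frac{\beta(u-\bar u)^2}{\|\nabla u\|_{L^2}^2}\right).
\]
Now the natural change of variable is $v := \sqrt{\beta}(u-\bar u)/\|\nabla u\|_{L^2}$, which satisfies $\bar v = 0$ and $\|\nabla v\|_{L^2}^2 = \beta$. The integral on the right becomes $\int_M e^{v^2}$, which by Theorem \ref{thm Struwe} is bounded by $C_{TM}(\beta)$ (the hypothesis of that theorem requires $\|\nabla v\|_{L^2}^2 \leq \beta$, matched with equality here). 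Hence
\[
\int_M e^{2p(u-\bar u)} \;\leq\; C_{TM}(\beta)\,\exp\!\left(\frac{p^2}{\beta}\int_M|\nabla u|^2\right).
\]

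\medskip

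Finally I would divide both sides by $C_{TM}(\beta)$ and take logarithms to obtain the claimed inequality. There is no real obstacle: the only subtlety is arranging the normalization so that the test function $v$ lands exactly in the admissible class of Theorem \ref{thm Struwe}, and the Young-inequality splitting suggested in the text is engineered precisely to achieve this.
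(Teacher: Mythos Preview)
Your proof is correct and follows exactly the approach the paper sketches: the paper merely displays the Young inequality and asserts the lemma, and you have filled in the exponentiation, integration, normalization $v=\sqrt{\beta}(u-\bar u)/\|\nabla u\|_{L^2}$, and appeal to Theorem~\ref{thm Struwe}. Your treatment of the degenerate case $\|\nabla u\|_{L^2}=0$ is a detail the paper omits entirely.
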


Now we denote $\frac{1}{8\pi} <\lambda \leq \Lambda$ such that for any $\xi \in \mathbb{R}^n$,
\begin{equation} \label{eq cond A^-1}
\lambda |\xi|^2 \leq A^{-1}(\xi,\xi) \leq \Lambda |\xi|^2.
\end{equation}
Fix $\beta = \beta(\lambda) < 4\pi$ such that
\begin{equation} \label{eq beta}
\frac{1}{8\pi} < \frac{1}{2\beta} < \lambda.
\end{equation}
Then the following lemma gives a lower bound for all $K(t)$.

\begin{lem} \label{lem lower bd of K}
Let $(u_i)$ be a smooth solution of \eqref{Liouville system flow} on $M \times [0,T]$.
Also, assume \eqref{eq cond A^-1}.
Then
\[
K(t) \geq \frac{\lambda - \frac{1}{2\beta}}{2} \sum_{j} \int_{M} |\nabla u_j|^2-\sum_{j} \log \left( C_{TM}(\beta) M_j \right).
\]
\end{lem}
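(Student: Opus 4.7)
The plan is to bound the two pieces of $K(t)$ separately, using the ellipticity of $A^{-1}$ for the quadratic term and Lemma \ref{lem Struwe} for the logarithmic term.

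First, by hypothesis \eqref{eq cond A^-1}, pointwise in $x$ we have
\[
\sum_{i,j} a^{ij} \nabla u_i(x) \cdot \nabla u_j(x) \;\ge\; \lambda \sum_j |\nabla u_j(x)|^2,
\]
so the quadratic part of $K(t)$ is bounded below by $\tfrac{\lambda}{2} \sum_j \int_M |\nabla u_j|^2$. This is immediate and requires no analysis.

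The main step is to produce a matching upper bound on each $\log \int_M h_j e^{u_j}$. Since $h_j$ is continuous on the compact surface $M$, we can set $M_j := \|h_j\|_{L^\infty(M)}$, so that $\int_M h_j e^{u_j} \le M_j \int_M e^{u_j}$. Because we have normalized $\bar u_j = 0$, Lemma \ref{lem Struwe} applied with exponent $2p = 1$ (i.e.\ $p = 1/2$) and the chosen $\beta < 4\pi$ satisfying \eqref{eq beta} yields
\[
\log \int_M e^{u_j} \;\le\; \frac{1}{4\beta} \int_M |\nabla u_j|^2 \;+\; \log C_{TM}(\beta).
\]
Combining, $\log \int_M h_j e^{u_j} \le \log(C_{TM}(\beta) M_j) + \tfrac{1}{4\beta} \int_M |\nabla u_j|^2$.

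Summing over $j$ and inserting these two bounds into the definition of $K(t)$ gives
\[
K(t) \;\ge\; \frac{\lambda}{2}\sum_j \int_M |\nabla u_j|^2 \;-\; \frac{1}{4\beta}\sum_j \int_M |\nabla u_j|^2 \;-\; \sum_j \log\bigl(C_{TM}(\beta)M_j\bigr),
\]
which rearranges to the claimed inequality. The coefficient $\tfrac{\lambda - 1/(2\beta)}{2}$ is strictly positive precisely because \eqref{eq beta} was chosen to guarantee $\lambda > 1/(2\beta)$, which is the whole point of coupling the choice of $\beta$ to the ellipticity constant $\lambda$.

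There is no real obstacle here; the only subtlety worth flagging is the exponent bookkeeping in Lemma \ref{lem Struwe} (making sure $p = 1/2$ produces $\tfrac{1}{4\beta}$ and not $\tfrac{1}{\beta}$ or $\tfrac{1}{2\beta}$), since it is this factor that, together with $\tfrac{\lambda}{2}$, yields the exact constant $\tfrac{\lambda - 1/(2\beta)}{2}$ in the statement.
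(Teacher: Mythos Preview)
Your proof is correct and follows essentially the same approach as the paper: bound the quadratic form from below using \eqref{eq cond A^-1}, bound $h_j$ by $M_j = \max_M h_j$, and apply Lemma \ref{lem Struwe} with $p=\tfrac{1}{2}$ to control $\log\int_M e^{u_j}$. The exponent bookkeeping you flagged is exactly right, and the paper's proof differs only in presentation.
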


\begin{proof}
By direct computation and Lemma \ref{lem Struwe} with $p=\frac{1}{2}$ and \eqref{eq cond A^-1},
\[
\begin{split}
    K(t) \geq& \frac{1}{2} \sum_{i,j} \int_{M} a^{ij} \nabla u_i \nabla u_j - \sum_{j}\log \left( C_{TM}(\beta) \max_{M} h_j \right) - \sum_{j} \log \left( \frac{1}{C_{TM}(\beta)} \int_{M} e^{u_j} \right) \\
    \geq& \frac{1}{2} \sum_{i,j} \int_{M} a^{ij} \nabla u_i \nabla u_j - \sum_{j}\log \left( C_{TM}(\beta) M_j \right) - \sum_{j} \frac{1}{4 \beta} \int_{M} |\nabla u_j|^2\\
    \geq& \frac{\lambda - \frac{1}{2\beta}}{2} \sum_{j} \int_{M} |\nabla u_j|^2 - \sum_{j}\log \left( C_{TM} (\beta) M_j \right).
\end{split}
\]

\end{proof}

From Lemma \ref{lem lower bd of K} we have
\begin{equation} \label{eq K lower bd}
    K(t) \geq c_0^{-1} \sum_{j} \int_{M} |\nabla u_j|^2 - C_M
\end{equation}
where $c_0$ and $C_M$ are positive constants independent of $t$.

A consequence of Lemma \ref{lem K dec} and \eqref{eq K lower bd} is that there exists the limit 
\[
K(\infty) := \lim_{t \to \infty} K(t) \geq -C_M.
\]
Also, we have that for any $t \in [0,T]$,
\begin{equation} \label{eq nabla u bdd}
\begin{split}
\sum_{j}\int_{M} |\nabla u_j|^2(t) \leq& c_0 \left( K(0) + C_M\right)
=: C_0 < +\infty
\end{split}
\end{equation}
where $C_0>0$ depends only on $\lambda, n, \beta, M_j$ and $u_{j,0}$.

\section{Short-time existence}
\label{sec3}

In this section, we show the short-time existence.
We first introduce some necessary notation.

Let $\Omega \subset \mathbb{R}^n$ and denote $\Omega_T = \Omega \times (0,T)$ for $T>0$.
For $k$ to be a nonnegative integer, $1 \leq p < \infty$, we define
\[
W^{2k,k}_{p}(\Omega_T) := \{ u \in L^p(\Omega_T) : \|u\|_{W^{2k,k}_{p}(\Omega_T)} < \infty\}
\]
where
\[
\|u\|_{W^{2k,k}_{p}(\Omega_T)} := \left( \iint_{\Omega_T}  \sum_{|\alpha| + 2r \leq 2k} |D^\alpha D_t^r u |^p dxdt \right)^{1/p}.
\]
For $1 \leq p,q \leq \infty$, we define
\[
L^q(L^p(\Omega)) := L^q([0,T];L^p(\Omega)) = \{u : \int_{0}^{T} \|u(t)\|_{L^p(\Omega)}^q < \infty\}
\]
with the norm
\[
\|u\|_{L^q(L^p(\Omega))} = \left(\int_{0}^{T} \|u(t)\|_{L^p(\Omega)}^q \right)^{1/q}.
\]
We also define
\[
C^{\alpha,\alpha/2}(\Omega_T) := \{u : |u|_{C^{\alpha,\alpha/2}(\Omega_T)} < \infty\}
\]
where
\[
\begin{split}
|u|_{C^{\alpha,\alpha/2}(\Omega_T)} =& \sup_{\Omega_T} |u| + [u]_{C^{\alpha,\alpha/2}(\Omega_T)},\\
[u]_{C^{\alpha,\alpha/2}(\Omega_T)} =& \sup_{\substack{(x,t),(y,s) \in \Omega_T \\ (x,t) \neq (y,s)}} \frac{|u(x,t)-u(y,s)|}{(|x-y|^2+|t-s|)^{\frac{\alpha}{2}}}
\end{split}
\]
and
\[
C^{2k+\alpha,k+\alpha/2}(\Omega_T) := \{u : D^\beta D_t^r u \in C^{\alpha,\alpha/2}(\Omega_T) \text{ for any } \beta,r \text{ such that } |\beta| +2r \leq 2k \}.
\]

Now we have the following version of Sobolev embedding.
Let $\Omega \subset \mathbb{R}^2$ and $\nabla$ denotes spatial derivative.

\begin{lem}
(Sobolev embedding of $t$-anisotropic functions)
[\cite{wu-zhuoqun} Theorem 1.4.1 or \cite{borer} Theorem 2.2 and Theorem 3.13]

Let $u \in W^{2,1}_p (\Omega_T)$, $p>2$.
Then
\[
|u|_{C^{\alpha,\alpha/2}(\Omega_T)} \leq C_1(\Omega_T) \|u\|_{W^{2,1}_{p}(\Omega_T)} 
\]
with $0<\alpha<2-\frac{4}{p}$.
Also, we have
\[
\begin{cases}
    \|\nabla u\|_{L^\infty(L^q(\Omega))} &\leq C_2(\Omega_T) \|u\|_{W^{2,1}_p (\Omega_T)} \quad   \qquad \text{ if }\ \ p < 4, \ q \leq \frac{2p}{4-p} \\
   \|\nabla u\|_{L^\infty(L^q(\Omega))} &\leq C_2(\Omega_T) \|u\|_{W^{2,1}_p (\Omega_T)} \quad  \qquad \text{ if }\ \ p = 4 , \ q < \infty\\
   \|\nabla u\|_{L^\infty(C^\alpha(\Omega))} &\leq C_2(\Omega_T) \|u\|_{W^{2,1}_p (\Omega_T)} \quad  \qquad \text{ if }\ \ p > 4, \ \alpha = 1 - \frac{4}{p}.
\end{cases}
\]
\end{lem}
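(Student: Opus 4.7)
The plan is to reduce the statement to standard results about anisotropic Sobolev spaces on $\mathbb{R}^{2+1}$ via an extension operator, and to use a scaling analysis to pin down the critical exponents. Under parabolic scaling $(x,t)\mapsto(\lambda x,\lambda^2 t)$, the effective dimension of $\Omega_T$ is $n+2=4$, which predicts every threshold in the statement: $u\in W^{2,1}_p$ has parabolic order $2$, so the Morrey condition is $2p>4$, i.e.\ $p>2$, with parabolic Hölder exponent $2-4/p$; while $\nabla u$ has parabolic order $1$, giving the critical Lebesgue exponent $q=2p/(4-p)$ when $p<4$ (from $1/q=2/p-1/2$), a critical case at $p=4$, and parabolic Morrey exponent $1-4/p$ when $p>4$.

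First I would extend $u$ from $\Omega_T$ to a function $\tilde u$ on $\mathbb{R}^{2+1}$ via a standard parabolic extension for smooth cylinders, with $\|\tilde u\|_{W^{2,1}_p(\mathbb{R}^{3})}\le C(\Omega_T)\,\|u\|_{W^{2,1}_p(\Omega_T)}$, the constant $C(\Omega_T)$ depending only on the geometry of $\Omega_T$.

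Second, for the Hölder part I would write $\tilde u$ as a convolution of the heat kernel $\Gamma$ against $(\partial_t-\Delta)\tilde u\in L^p$ (plus harmless lower-order terms) and apply the parabolic Morrey-type inequality: the parabolic Riesz potential of order $2$ of an $L^p$ function lies in $C^{2-4/p}$ with respect to the parabolic metric
\[
d_P((x,t),(y,s))=|x-y|+|t-s|^{1/2},
\]
which is precisely $C^{\alpha,\alpha/2}$ in Euclidean coordinates for every $\alpha<2-4/p$. Third, for the gradient bounds I would differentiate the heat-potential representation once in $x$, so that $\nabla\tilde u$ is a parabolic Riesz potential of order $1$ acting on an $L^p$ function; then I apply parabolic Hardy--Littlewood--Sobolev in the subcritical case $p<4$, a Trudinger/BMO-type argument in the borderline case $p=4$ yielding all $q<\infty$, and parabolic Morrey in the supercritical case $p>4$. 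The estimates obtained this way are $t$-independent, so taking the sup in $t$ is automatic.

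The main technical obstacle is the extension step, which must preserve the anisotropic norm, together with the borderline $p=4$ case where scaling fails to pick out a single target exponent. Both are classical; since Theorem 1.4.1 of \cite{wu-zhuoqun} and Theorems 2.2 and 3.13 of \cite{borer} state exactly this lemma, it suffices to quote those references once the extension has been performed.
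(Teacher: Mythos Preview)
The paper gives no proof of this lemma at all: it is stated with a bracketed citation to \cite{wu-zhuoqun} Theorem~1.4.1 and \cite{borer} Theorems~2.2 and~3.13, and is then used as a black box. Your closing sentence (``it suffices to quote those references'') is therefore exactly what the paper does, and everything before that sentence is surplus to requirements.

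That said, your sketch is a correct outline of how such anisotropic embeddings are proved in the cited references: the parabolic scaling heuristic with homogeneous dimension $n+2=4$ correctly identifies all the thresholds ($p>2$ for H\"older continuity of $u$, $q=2p/(4-p)$ for the gradient, the borderline $p=4$, and $\alpha=1-4/p$ when $p>4$), and the strategy of extension to $\mathbb{R}^{2+1}$ followed by heat-potential representation and parabolic Riesz/Morrey/HLS estimates is the standard route. One small point: you phrase the gradient estimate as coming from ``differentiating the heat-potential representation once in $x$'', which is fine heuristically, but the cleaner bookkeeping in the references is usually via interpolation/trace arguments showing $\nabla u\in L^\infty((0,T);W^{1-2/p,p}(\Omega))$ (or the analogous Besov trace space), from which the stated $L^q$ and $C^\alpha$ targets follow by ordinary spatial Sobolev embedding. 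Either way, for the purposes of this paper the correct move is simply to cite and move on, as the authors do.
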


In particular, we have that for any $u \in W^{2,1}_p (M_T)$,
\[
\int_{M} |\nabla u|^2(t) \leq C_2 \|u\|_{W^{2,1}_p(M_T)}
\]
where $C_2 = C_2(M_T)$ and $M_T = M \times (0,T)$.

Next, we need the following existence theorem.
Fix $T_0>0$ and consider $0 < T \leq T_0$.

\begin{prop} \label{prop BEW}
[\cite{borer} Proposition 6.2]
Let $u_0 \in W^{2,p}(M)$ and $f \in L^p(M_T)$.
Then there exists a unique strong solution $u \in W^{2,1}_p(M_T)$ of the initial value problem
\begin{equation} \label{eq IVP}
\begin{cases}
    \partial_t u &= \Delta u + f \qquad \quad \text{ on } M_T \\
    u(x,0) &= u_0(x) \qquad \qquad \text{ in } M
\end{cases}
\end{equation}
satisfying
\begin{equation} \label{eq sol Lp}
    \|u\|_{W^{2,1}_p(M_T)} \leq C_3 \left(\|u_0\|_{W^{2,p}(M)} + \|f\|_{L^p(M_T)} \right)
\end{equation}
for some constant $C_3$ which depends on $T_0$ but not on $T$.
Moreover, if $f \in C^{\alpha}(M_T)$ for some $\alpha>0$, then $u \in C(\overline{M_T}) \cap C^{2,1}(M_T)$ and
\[
\|u_0\|_{W^{1,2}(M)} \geq \limsup_{t \to 0^+} \|u(t)\|_{W^{1,2}(M)}.
\]
\end{prop}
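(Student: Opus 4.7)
The plan is to construct $u$ via the Duhamel formula for the heat semigroup on $M$. Let $\{e^{t\Delta}\}_{t\geq 0}$ denote the strongly continuous analytic semigroup generated by $\Delta$ on $L^p(M)$, and set
\[
u(t) := e^{t\Delta} u_0 + \int_0^t e^{(t-s)\Delta} f(s)\, ds.
\]
Uniqueness of such a solution follows from a standard $L^p$-energy argument applied to the difference of two candidates. For the $W^{2,1}_p$ bound \eqref{eq sol Lp}, I would reduce to the Euclidean setting: fix a finite atlas $\{(U_k,\phi_k)\}$ on $M$ with subordinate partition of unity $\{\eta_k\}$, so that $u_k := \eta_k u$ satisfies a Euclidean heat equation on a coordinate ball with source $\eta_k f + [\Delta,\eta_k] u$. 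Classical parabolic $L^p$ theory (e.g.\ Ladyzhenskaya-Solonnikov-Uraltseva) then yields $u_k \in W^{2,1}_p$ with the expected estimate. The commutator $[\Delta,\eta_k]u$ involves only lower-order terms in $u$, so after interpolating between $W^{2,1}_p(M_T)$ and $L^p(M_T)$ with an $\varepsilon$-small constant one absorbs these into the left-hand side to obtain the global bound \eqref{eq sol Lp} with a constant depending only on $T_0$, not on $T \leq T_0$.

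For the second part, when $f \in C^{\alpha}(M_T)$ one chooses $p$ large and uses the anisotropic Sobolev embedding of the previous lemma to conclude $u \in C^{1+\gamma,(1+\gamma)/2}(M_T)$; in particular $\nabla u$ is H\"older continuous. Combined with the H\"older regularity of $f$, classical interior parabolic Schauder estimates (applied chart-by-chart) upgrade $u$ to $C^{2+\alpha,1+\alpha/2}$ locally on $M\times(0,T)$, so $u \in C^{2,1}(M_T)$. Continuity up to $t=0$ in $C(\overline{M_T})$ follows from strong $L^p$-continuity of $e^{t\Delta} u_0$ at $t=0$ together with the vanishing of the Duhamel term as $t \to 0^+$.

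The final $\limsup$ inequality rests on two semigroup facts. First, $e^{t\Delta}$ is non-expansive on $W^{1,2}(M)$: differentiating $\tfrac{1}{2}\int_M |\nabla e^{t\Delta} u_0|^2$ yields $-\int_M (\Delta e^{t\Delta} u_0)^2 \leq 0$, which combined with $L^2$-contractivity gives $\|e^{t\Delta} u_0\|_{W^{1,2}} \leq \|u_0\|_{W^{1,2}}$. Second, the Duhamel contribution is $O(\sqrt{t})$ in $W^{1,2}$: using the analytic smoothing estimate $\|e^{\tau\Delta} g\|_{W^{1,2}} \leq C\tau^{-1/2}\|g\|_{L^2}$ and boundedness of $f$ on $M_T$, one gets $\|\int_0^t e^{(t-s)\Delta} f(s)\, ds\|_{W^{1,2}} \leq C\sqrt{t} \to 0$. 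Adding these bounds yields the stated $\limsup$ inequality. The main technical hurdle is the maximal $L^p$-regularity estimate \eqref{eq sol Lp}; the partition-of-unity reduction with commutator absorption is routine but bookkeeping-heavy, which is why one typically invokes the result from classical parabolic theory rather than reproving it.
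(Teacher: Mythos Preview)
The paper does not give a proof of this proposition at all: it is stated as a quotation of \cite{borer}, Proposition~6.2, and used as a black box. So there is no ``paper's own proof'' to compare against; your sketch is supplying what the authors deliberately outsourced.

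As a standalone outline your argument is sound and follows the standard route (Duhamel representation, maximal $L^p$ regularity via localization and classical parabolic theory, Schauder bootstrap for the smoothness claim, and heat-semigroup contractivity on $W^{1,2}$ plus an $O(\sqrt t)$ Duhamel tail for the $\limsup$ inequality). One small imprecision: after multiplying by the cutoff $\eta_k$ and pushing to a chart, the resulting operator is not the Euclidean heat operator but $\partial_t - L_k$ with $L_k$ a second-order uniformly elliptic operator with smooth variable coefficients (coming from the metric $g$). The maximal $L^p$ estimate you invoke still holds in that generality (Ladyzhenskaya--Solonnikov--Ural'tseva covers it), so the argument goes through, but the sentence ``satisfies a Euclidean heat equation'' should be adjusted. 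Likewise, the contractivity computation $\tfrac{d}{dt}\tfrac12\int_M|\nabla e^{t\Delta}u_0|^2=-\int_M(\Delta e^{t\Delta}u_0)^2$ is literally valid for smooth $u_0$; for $u_0\in W^{2,p}$ one should remark that the inequality passes to the limit by density, which you implicitly assume.
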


Another, more commonly used version of above proposition is the following.
\begin{prop}
    Let $u_0 \in C^{2,\alpha}(M)$ and $f \in C^{\alpha,\alpha/2}(M_T)$.
    Then there exists a unique strong solution $u \in C^{2+\alpha,1+\alpha/2}(M_T)$ of the initial value problem \eqref{eq IVP} satisfying
    \begin{equation} \label{eq sol Holder}
        \|u\|_{C^{2+\alpha,1+\alpha/2}(M_T)} \leq C_3 \left(\|u_0\|_{C^{2,\alpha}(M)} + \|f\|_{C^{\alpha,\alpha/2}(M_T)} \right)
    \end{equation}
    for some constant $C_3$ which depends on $T_0$ but not on $T$.
\end{prop}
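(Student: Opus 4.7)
The plan is to invoke the classical parabolic Schauder theory of Ladyzhenskaya--Solonnikov--Ural'tseva, adapted to the closed Riemann surface $M$ by localization. First, cover $M$ by finitely many coordinate charts $\{(U_k, \varphi_k)\}$ with a subordinate partition of unity $\{\eta_k\}$. In each chart the operator $\partial_t - \Delta_g$ reads $\partial_t - \frac{1}{\sqrt{|g|}}\partial_i(\sqrt{|g|}\,g^{ij}\partial_j\,\cdot\,)$, which is uniformly parabolic with smooth coefficients. Multiplying the equation by $\eta_k$ produces an equation for $\eta_k u$ on a Euclidean domain with right-hand side $\eta_k f$ plus commutator terms involving $u$ and $\nabla u$; these remainder terms lie in $C^{\alpha,\alpha/2}$ as soon as $u$ does, so the classical Euclidean parabolic Schauder estimate applies to each localized piece, and summing over $k$ yields the global estimate \eqref{eq sol Holder}.

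For existence I would proceed by approximation, using Proposition \ref{prop BEW} as the workhorse. Take smooth approximations $u_0^{(j)} \to u_0$ in $C^{2,\alpha}(M)$ and $f^{(j)} \to f$ in $C^{\alpha,\alpha/2}(M_T)$. Applying Proposition \ref{prop BEW} with arbitrarily large $p$ to each pair $(u_0^{(j)}, f^{(j)})$ produces strong solutions $u^{(j)} \in W^{2,1}_p(M_T)$, which are in fact smooth by a bootstrap because $f^{(j)}$ is smooth. The a priori Schauder estimate from the previous paragraph then gives uniform $C^{2+\alpha,1+\alpha/2}$ bounds on the $u^{(j)}$. Arzel\`a--Ascoli extracts a subsequence converging in $C^{2,1}(M_T)$ to a function $u \in C^{2+\alpha,1+\alpha/2}(M_T)$, and passing to the limit in the PDE shows that $u$ solves \eqref{eq IVP}.

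Uniqueness is immediate from the maximum principle or energy method: if $u_1, u_2$ are two solutions, then $w = u_1 - u_2$ satisfies $\partial_t w = \Delta w$ with $w(\cdot,0)=0$. Multiplying by $w$ and integrating over $M$ gives $\frac{d}{dt}\|w\|_{L^2(M)}^2 = -2\|\nabla w\|_{L^2(M)}^2 \le 0$, so $w \equiv 0$.

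The step I expect to be the main obstacle is verifying that $C_3$ depends only on $T_0$ and not on $T \le T_0$. Concretely, one must apply the Euclidean Schauder estimate on a fixed reference parabolic cylinder of size $T_0$ (extending the solution in time if necessary, e.g.\ by reflection or by running the heat semigroup forward from $u(T)$), and then check that the resulting constant dominates the one obtained for every $T \le T_0$. This requires some bookkeeping with how the parabolic H\"older semi-norms behave under restriction in $t$, but it is purely technical once the local Schauder estimates are in hand.
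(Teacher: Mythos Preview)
The paper does not actually prove this proposition: it is stated without proof, introduced with the phrase ``Another, more commonly used version of above proposition is the following,'' immediately after Proposition~\ref{prop BEW} (which is itself quoted from \cite{borer}). In other words, the authors are simply recording the classical parabolic Schauder estimate as a known companion to the $L^p$ result, not deriving it.

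Your proposal, by contrast, sketches a genuine proof along the standard lines of Lady\v{z}enskaja--Solonnikov--Ural'tseva: localize to coordinate charts, apply the Euclidean parabolic Schauder estimate, handle commutator terms, and obtain existence by smooth approximation plus Arzel\`a--Ascoli. This is a correct strategy and is precisely how the result is established in the classical references; it is considerably more than what the paper does, which is simply to quote the statement. Your remark about the $T_0$-versus-$T$ dependence is also well taken: the standard way to get a constant independent of $T\le T_0$ is to extend $f$ and $u_0$ to the fixed cylinder $M_{T_0}$ (e.g.\ extend $f$ by its value at $T$, solve on $M_{T_0}$, and restrict), so that one always works on the same domain. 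This is routine but does need to be said, and your identification of it as the only nontrivial bookkeeping step is accurate.
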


Now we set up the Banach space $W^{2,1}_p(M_T)$ and for any $R>0$, its closed subset
\[
X_{R,i} := \left\{u \in W^{2,1}_p(M_T) : \|u\|_{W^{2,1}_p(M_T)}\leq R, u(x,0)=u_{i,0}(x), \int_{M} u = 0\right\}.
\]
Denote $X_R = \prod_{i=1}^{n} X_{R,i}$.
Then $X_R$ is a closed subset of the Banach space 
\[X = \prod_{i=1}^{n} W^{2,1}_p (M_T)\] which has a norm given by
\[
\|u\|_{X} = \sum_{i=1}^{n} \|u_i\|_{W^{2,1}_p(M_T)}.
\]

Fix $R$ such that
\[
2C_3 \max_{i} \|u_{i,0}\|_{W^{2,p}(M)} \leq R.
\]
Then we define the map $\Phi : X \to X$ as follows: for $v=(v_1,...,v_n) \in \prod X_{R,i}$, 
\[\Phi(v) = u = (u_1,...,u_n)\] where $u_i$ is the unique solution of
\begin{equation} \label{eq IVP2}
    \begin{cases}
    \partial_t u_i &= \Delta u_i + f_i \qquad \quad \text{ on } M_T \\
    u_i(x,0) &= u_{i,0}(x) \qquad \qquad \text{ in } M
\end{cases}
\end{equation}
for
\begin{equation} \label{eq f_i}
f_i = \sum_{j} a_{ij} \left(\frac{h_j e^{v_j}}{\int h_j e^{v_j}} - 1 \right).
\end{equation}

Fix a positive constant $q>1$.

\begin{lem} \label{lem ball}
    The map $\Phi$ defined above restricts to $\Phi : X_R \to X_R$ if
    \begin{equation}
        T \leq \frac{R}{2C_3 C |A|^p \sum_{j}  \left( \frac{M_j^p}{\|h_j^{1/q}\|_{L^{1}}^{pq}} \left( C_{TM} e^{\frac{1}{(q-1)^2 8\pi} C_2 R}\right)^{(q-1)p}  C_{TM} e^{\frac{p^2}{8\pi} C_2 R} + 1 \right)}.
    \end{equation}
\end{lem}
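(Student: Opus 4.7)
The plan is to verify $\Phi(v)\in X_R$ for $v\in X_R$ by applying Proposition \ref{prop BEW} to \eqref{eq IVP2}. That proposition yields
$$\|u_i\|_{W^{2,1}_p(M_T)} \leq C_3\bigl(\|u_{i,0}\|_{W^{2,p}(M)} + \|f_i\|_{L^p(M_T)}\bigr),$$
and the calibration $2C_3\max_i\|u_{i,0}\|_{W^{2,p}} \leq R$ already absorbs the initial-data contribution into $R/2$. The task therefore reduces to dominating $\|f_i\|_{L^p(M_T)}$ by $R/(2C_3)$ uniformly over $v\in X_R$ by taking $T$ small. The mean-zero condition built into $X_{R,i}$ is preserved automatically, since $\int_M f_i(\cdot,t)=0$ (using $|M|=1$).

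Since $|f_i|\leq |A|\sum_j\bigl(h_j e^{v_j}/\int h_j e^{v_j}+1\bigr)$, the heart of the matter is a pointwise-in-$t$ estimate of $\int_M(h_j e^{v_j}/\int h_j e^{v_j})^p$. For the numerator I use $h_j\leq M_j$ together with Lemma \ref{lem Struwe} with parameter $p/2$ and $\beta=2\pi$ to obtain
$$\int_M h_j^p e^{pv_j} \leq M_j^p\,C_{TM}\,\exp\!\Bigl(\tfrac{p^2}{8\pi}\int_M|\nabla v_j|^2\Bigr).$$
The main obstacle is a lower bound on $\int_M h_j e^{v_j}$ in the absence of any positive lower bound on $h_j$; this is precisely what the auxiliary parameter $q>1$ is designed for. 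Hölder's inequality with exponents $q$ and $q/(q-1)$ applied to $h_j^{1/q}=(h_j^{1/q}e^{v_j/q})\cdot e^{-v_j/q}$ yields
$$\|h_j^{1/q}\|_{L^1}^{q} \leq \Bigl(\int_M h_j e^{v_j}\Bigr)\Bigl(\int_M e^{-v_j/(q-1)}\Bigr)^{q-1},$$
and Lemma \ref{lem Struwe} with parameter $-1/(2(q-1))$ and $\beta=2\pi$ controls the last factor by $C_{TM}\exp\!\bigl(\int|\nabla v_j|^2/(8\pi(q-1)^2)\bigr)$.

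Combining these estimates with the Sobolev embedding $\int_M|\nabla v_j|^2(t) \leq C_2\|v_j\|_{W^{2,1}_p(M_T)} \leq C_2 R$ produces the pointwise-in-$t$ inequality
$$\int_M\Bigl(\frac{h_j e^{v_j}}{\int h_j e^{v_j}}\Bigr)^p \leq \frac{M_j^p}{\|h_j^{1/q}\|_{L^1}^{pq}}\bigl(C_{TM}e^{C_2 R/(8\pi(q-1)^2)}\bigr)^{(q-1)p}C_{TM}e^{p^2 C_2 R/(8\pi)},$$
which is exactly the bracketed quantity in the statement. Integrating in $t\in(0,T)$ contributes a factor of $T$, the ``$-1$'' inside $f_i$ contributes the ``$+1$'' per $j$ (using $|M|=1$), and summing over $j$ with $|a_{ij}|\leq|A|$ gives a bound on $\|f_i\|_{L^p(M_T)}$ of the announced form. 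The smallness hypothesis on $T$ then forces $C_3\|f_i\|_{L^p(M_T)} \leq R/2$, so $\|u_i\|_{W^{2,1}_p(M_T)} \leq R$ and thus $\Phi(v)\in X_R$.
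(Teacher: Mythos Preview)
Your proposal is correct and follows essentially the same route as the paper: reduce to bounding $\|f_i\|_{L^p(M_T)}$ via Proposition~\ref{prop BEW}, control the numerator $\int_M e^{pv_j}$ and the denominator $\int_M h_j e^{v_j}$ through Lemma~\ref{lem Struwe} (the latter via the H\"older trick with the auxiliary exponent $q$, which is exactly how the paper circumvents the lack of a positive lower bound on $h_j$), and feed in the Sobolev bound $\int_M|\nabla v_j|^2(t)\le C_2R$. The only cosmetic discrepancy is that the stated threshold on $T$ literally yields $\iint_{M_T}|f_i|^p\le R/(2C_3)$ rather than $\|f_i\|_{L^p(M_T)}\le R/(2C_3)$; the paper's own proof has the same feature, and it is harmless once one notes $R\ge 2C_3$ (or simply absorbs the difference into the constant $C$).
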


\begin{proof}
We first show that $\Phi$ maps $X_R$ to $X_R$ if $T$ is small enough.
Let $u_i$ be a solution of \eqref{eq IVP2}.
Then clearly $u_i(x,0) = u_{i,0}(x)$ and $\int_{M} \partial_t u_i = 0$, which implies $\int_{M}  u_i = 0$ under the assumption $\int_{M}  u_{i,0} = 0$.
It remains to show that $\|u_i\|_{W^{2,1}_p(M_T)} \leq R$ for all $T$ small enough if $v_j \in X_R$.

Since $v_j \in X_{R,j}$, we have $\int_{M} |\nabla v_j|^2(t) \leq C_2 \|v_j\|_{W^{2,1}_p(M_T)} \leq C_2 R$.
Also note that by Lemma \ref{lem Struwe},
\[
\begin{split}
    0<&\|h_j^{1/q}\|_{L^{1}} = \int h_j^{1/q} \leq \left( \int h_j e^{v_j}\right)^{\frac{1}{q}} \left( \int e^{\frac{-1}{q-1}v_j} \right)^{1-\frac{1}{q}}\\
    \leq& \left( \int h_j e^{v_j} \right)^{\frac{1}{q}} \left( C_{TM} e^{\frac{1}{(q-1)^2 8\pi} C_2 R} \right)^{1-\frac{1}{q}}
\end{split}
\]
which implies
\begin{equation} \label{eq h e^v lower}
    \int h_j e^{v_j} \geq \|h_j^{1/q}\|_{L^{1}}^{q} \left( C_{TM} e^{\frac{1}{(q-1)^2 8\pi} C_2 R} \right)^{-q+1} > 0.
\end{equation}
Then
\[
\begin{split}
   \iint |f_i|^p \leq& \iint \sum_{j} |A|^p \left| \frac{h_j e^{v_j}}{\int h_j e^{v_j}} - 1\right|^p\\
    \leq& C |A|^p \sum_j  \iint \left( \frac{h_j^p e^{pv_j}}{(\int h_j e^{v_j})^p} + 1\right)\\
    \leq& C |A|^p \sum_j \left( \int_{0}^{T} \frac{M_j^p}{\|h_j^{1/q}\|_{L^{1}}^{pq}} \left( C_{TM} e^{\frac{1}{(q-1)^2 8\pi} C_2 R}\right)^{(q-1)p} \int_M e^{pv_j} + T \right)\\
    \leq& C |A|^p \sum_j \left( \frac{M_j^p}{\|h_j^{1/q}\|_{L^{1}}^{pq}} \left( C_{TM} e^{\frac{1}{(q-1)^2 8\pi} C_2 R}\right)^{(q-1)p}  C_{TM} e^{\frac{p^2}{8\pi} C_2 R} + 1 \right) T\\
    \leq& \frac{R}{2C_3}.
\end{split}
\]

Here we apply Lemma \ref{lem Struwe} and denote $|A| = \max |a_{ij}|$.
Finally, by \eqref{eq sol Lp}, we have
\[
    \|u_i\|_{W^{2,1}_p(M_T)} \leq C_3 \left(\|u_{i,0}\|_{W^{2,p}(M)} + \|f_i\|_{L^p(M_T)} \right) \leq R.
\]
This completes the proof.
\end{proof}

\begin{thm} \label{thm short time}
(Short-time existence)
Let $p>2$ and $u_{i,0} \in W^{2,p}(M)$ with $\int_{M} u_{i,0}=0$.
    For $T$ satisfying
    \begin{equation} \label{eq cond T}
    \begin{split}
    T \leq& \min \left\{ \frac{R}{2C_3 C |A|^p \sum_{j}  \left( \frac{M_j^p}{\|h_j^{1/q}\|_{L^{1}}^{pq}} \left( C_{TM} e^{\frac{1}{(q-1)^2 8\pi} C_2 R}\right)^{(q-1)p}  C_{TM} e^{\frac{p^2}{8\pi} C_2 R} + 1 \right)}, \right.\\
    & \left. \qquad \qquad\frac{1}{4n C_3 C |A|^p \sum_{j} \frac{M_j^{2p}}{\|h_j^{1/q}\|_{L^{1}}^{2pq}} \left( C_{TM} e^{\frac{1}{(q-1)^2 8\pi} C_2 R}\right)^{(q-1)2p}  C_{TM} e^{\frac{p^2}{8\pi} C_2 R} e^{pC_1 R}} \right\},
    \end{split}
    \end{equation}
    we have
    \[
    \|\Phi(v) - \Phi(\tilde{v})\|_X \leq \frac{1}{2} \|v - \tilde{v}\|_{X}.
    \]    
    Hence, by Banach fixed point theorem, for $T$ small enough, there exists a unique fixed point $u \in X_R$ such that $\Phi(u)=u$, that is, $u = (u_i)$ solves \eqref{Liouville system flow} with the initial condition $u_i(x,0) = u_{i,0}(x)$.
\end{thm}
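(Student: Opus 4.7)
The plan is to establish that $\Phi$ is a contraction on $X_R$ with constant $\tfrac12$, then invoke the Banach fixed point theorem; the self-map property on $X_R$ is already handled by Lemma \ref{lem ball}, so the remaining work is the contraction estimate. Given $v,\tilde v\in X_R$, set $u=\Phi(v)$, $\tilde u=\Phi(\tilde v)$, and $w_i=u_i-\tilde u_i$. Then $w_i$ solves the inhomogeneous heat equation
\[
\partial_t w_i=\Delta w_i+\bigl(f_i(v)-f_i(\tilde v)\bigr),\qquad w_i(\cdot,0)=0,
\]
so Proposition \ref{prop BEW} immediately yields $\|w_i\|_{W^{2,1}_p(M_T)}\le C_3\|f_i(v)-f_i(\tilde v)\|_{L^p(M_T)}$. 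Summing in $i$, the task reduces to proving
\[
\sum_i \|f_i(v)-f_i(\tilde v)\|_{L^p(M_T)} \le \frac{1}{2C_3}\,\|v-\tilde v\|_X.
\]

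The decomposition I would use is the standard one for difference quotients of normalized exponentials:
\[
\frac{h_j e^{v_j}}{\int h_j e^{v_j}}-\frac{h_j e^{\tilde v_j}}{\int h_j e^{\tilde v_j}}
=\frac{h_j(e^{v_j}-e^{\tilde v_j})}{\int h_j e^{v_j}}
+\frac{h_j e^{\tilde v_j}\int h_j(e^{\tilde v_j}-e^{v_j})}{(\int h_j e^{v_j})(\int h_j e^{\tilde v_j})}.
\]
Each piece is estimated using three ingredients that are already in hand. First, since $p>2$, the $t$-anisotropic Sobolev embedding gives a uniform bound $\|v_j\|_{L^\infty(M_T)},\|\tilde v_j\|_{L^\infty(M_T)}\le C_1 R$, hence $|e^{v_j}-e^{\tilde v_j}|\le e^{C_1 R}|v_j-\tilde v_j|$ pointwise, and likewise $e^{v_j},e^{\tilde v_j}\le e^{C_1 R}$. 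Second, the lower bound \eqref{eq h e^v lower} derived in the proof of Lemma \ref{lem ball} gives $\int h_j e^{v_j},\int h_j e^{\tilde v_j}\ge \|h_j^{1/q}\|_{L^1}^{q}\bigl(C_{TM}e^{\frac{C_2R}{(q-1)^2 8\pi}}\bigr)^{-(q-1)}$. Third, $\|h_j\|_{L^\infty}\le M_j$. Combining these gives a pointwise bound
\[
|f_i(v)-f_i(\tilde v)|\le K(R)\sum_j\bigl(|v_j-\tilde v_j|+\|v_j-\tilde v_j\|_{L^1(M)}\bigr),
\]
where $K(R)$ is exactly the constant appearing in the second term of \eqref{eq cond T}, up to absolute factors.

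To convert this into the required $W^{2,1}_p$ contraction, I would take $L^p(M_T)$ norms and absorb the time dimension by $T$: since $\|v_j-\tilde v_j\|_{L^\infty(M_T)}\le C_1\|v_j-\tilde v_j\|_{W^{2,1}_p(M_T)}$, one gets
\[
\|f_i(v)-f_i(\tilde v)\|_{L^p(M_T)}^p \le C\, K(R)^p\, T\sum_j \|v_j-\tilde v_j\|_{W^{2,1}_p(M_T)}^p,
\]
which upon taking $p$-th roots, summing over $i$, and combining with the $C_3$ factor from Proposition \ref{prop BEW}, produces $\|u-\tilde u\|_X\le\tfrac12\|v-\tilde v\|_X$ precisely when $T$ satisfies the second bound in \eqref{eq cond T}. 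The first bound in \eqref{eq cond T} is the one from Lemma \ref{lem ball} ensuring $\Phi(X_R)\subseteq X_R$; taking the minimum guarantees both. A unique fixed point $u\in X_R$ then exists by the Banach fixed point theorem, and unwinding the definition of $\Phi$ shows $u$ solves \eqref{Liouville system flow} with the prescribed initial data.

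The main obstacle is bookkeeping rather than any conceptual subtlety: one must carry the $e^{C_1 R}$ and the lower-bound constants through cleanly so that the contraction constant $K(R)$ matches the formula in \eqref{eq cond T}, and one must check that the two uses of $R$ (the radius of the ball and the $L^\infty$ Sobolev bound) are compatible with the choice $2C_3\max_i\|u_{i,0}\|_{W^{2,p}(M)}\le R$ fixed just before Lemma \ref{lem ball}.
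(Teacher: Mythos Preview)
Your proposal is correct and follows essentially the same route as the paper: apply Proposition~\ref{prop BEW} to $w_i=u_i-\tilde u_i$ with zero initial data, reduce to an $L^p(M_T)$ estimate on $f_i(v)-f_i(\tilde v)$, decompose the difference of normalized exponentials, invoke the lower bound \eqref{eq h e^v lower} on $\int h_j e^{v_j}$, use the $L^\infty$ embedding to get $|e^{v_j}-e^{\tilde v_j}|\le e^{C_1R}|v_j-\tilde v_j|$, and pick up a factor of $T$ at the end. The paper's decomposition $f\!\int g - g\!\int f = f\!\int(g-f) - (g-f)\!\int f$ is algebraically the same as yours.

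The one substantive difference is that the paper bounds the remaining factor $\int_M e^{pv_j}$ via the Moser--Trudinger estimate of Lemma~\ref{lem Struwe} (yielding $C_{TM}e^{\frac{p^2}{8\pi}C_2R}$), whereas you bound $e^{v_j}$ pointwise by $e^{C_1R}$ through the Sobolev embedding; likewise the paper uses the trivial inequality $\|v_j-\tilde v_j\|_{L^p(M_T)}\le\|v_j-\tilde v_j\|_{W^{2,1}_p(M_T)}$ rather than passing through $L^\infty$. Both routes give a valid contraction for $T$ small, but your constants will not literally reproduce the displayed expression in \eqref{eq cond T}: the factor $C_{TM}e^{\frac{p^2}{8\pi}C_2R}$ there is the Moser--Trudinger one, not an $L^\infty$ one. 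So your claim that $K(R)$ is ``exactly the constant appearing in the second term of \eqref{eq cond T}, up to absolute factors'' is a slight overstatement; if you want to match the stated formula you should bound $\int_M e^{pv_j}$ via Lemma~\ref{lem Struwe} rather than the $C^{\alpha,\alpha/2}$ embedding.
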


\begin{proof}
    Let $u = \Phi(v)$, $\tilde{u} = \Phi(\tilde{v})$.
    Also denote
    \[
    f_i = \sum_{j} a_{ij} \left(\frac{h_j e^{v_j}}{\int h_j e^{v_j}} - 1 \right), \quad \tilde{f}_i = \sum_{j} a_{ij} \left(\frac{h_j e^{\tilde{v}_j}}{\int h_j e^{\tilde{v}_j}} - 1 \right).
    \]
    Then $u_i-\tilde{u}_i$ solves
    \[
    \partial_t (u_i - \tilde{u}_i) = \Delta (u_i - \tilde{u}_i) + (f_i - \tilde{f}_i)
    \]
    with the initial condition $(u_i-\tilde{u}_i)(x,0) = 0$.
    As in the proof of Lemma \ref{lem ball}, $f_i-\tilde{f}_i \in L^p(M_T)$.
    Then by \eqref{eq sol Lp}, we have
    \[
    \|u-\tilde{u}\|_X = \sum_{i=1}^{n} \|u_i - \tilde{u}_i\|_{W^{2,1}_p(M_T)} \leq C_3 \sum_{i=1}^{n} \|f_i - \tilde{f}_i\|_{L^p(M_T)}.
    \]
    To estimate $\|f_i-\tilde{f}_i\|_{L^p(M_T)}$, note that
    \[
    \begin{split}
    f_i -\tilde{f}_i =& \sum_{j} a_{ij} \left(\frac{h_j e^{v_j}}{\int h_j e^{v_j}} - \frac{h_j e^{\tilde{v}_j}}{\int h_j e^{\tilde{v}_j}} \right)\\
    =& \sum_{j} a_{ij} \frac{1}{\int h_j e^{v_j} \int h_j e^{\tilde{v}_j}} \left(  h_j e^{v_j} \int_M h_j e^{\tilde{v}_j} -  h_j e^{\tilde{v}_j} \int_M h_j e^{v_j} \right).
    \end{split}
    \]
    Using
    \[
    \begin{split}
    \left| f \int g - g \int f\right|^p =& \left| f \int (g-f) - (g-f) \int f \right|^p\\
    \leq& C \left( |f|^p  \int |g-f|^p + |g-f|^p \int |f| ^p\right)
    \end{split}
    \]
    and by \eqref{eq h e^v lower}, we have
    \[
    \begin{split}
        \iint |f_i - \tilde{f}_i|^p \leq& |A|^p \sum_{j} \frac{\left( C_{TM} e^{\frac{1}{(q-1)^2 8\pi} C_2 R}\right)^{(q-1)2p}}{\|h_j^{1/q}\|_{L^{1}}^{2pq}}  \iint \left|  h_j e^{v_j} \int_M h_j e^{\tilde{v}_j} -  h_j e^{\tilde{v}_j} \int_M h_j e^{v_j} \right|^p\\
        \leq& C |A|^p \sum_{j}\frac{\left( C_{TM} e^{\frac{1}{(q-1)^2 8\pi} C_2 R}\right)^{(q-1)2p}}{\|h_j^{1/q}\|_{L^{1}}^{2pq}} \\
        & \cdot\iint \left( (h_j)^p e^{pv_j} \int_{M} \left|h_j e^{\tilde{v}_j} - h_j e^{v_j}\right|^p + |h_j e^{\tilde{v}_j} - h_j e^{v_j}|^p  \int_{M} (h_j)^p e^{pv_j} \right)\\
        \leq& 2C |A|^p \sum_{j} \frac{\left( C_{TM} e^{\frac{1}{(q-1)^2 8\pi} C_2 R}\right)^{(q-1)2p} M_j^{2p}}{\|h_j^{1/q}\|_{L^{1}}^{2pq}} \int_{0}^{T} \int_{M} |e^{\tilde{v}_j} - e^{v_j}|^p \int_{M} e^{pv_j}\\
        \leq& 2C |A|^p \sum_{j} \frac{\left( C_{TM} e^{\frac{1}{(q-1)^2 8\pi} C_2 R}\right)^{(q-1)2p} M_j^{2p}}{\|h_j^{1/q}\|_{L^{1}}^{2pq}}  \int_{0}^{T} C_{TM} e^{\frac{p^2}{8\pi} C_2 R}\int_{M} |e^{\tilde{v}_j} - e^{v_j}|^p.
    \end{split}
    \]
    Finally, we note that
    \[
    \left| e^{v_j} - e^{\tilde{v}_j} \right| \leq e^{\max \{v_j, \tilde{v}_j\}} \left( 1 - e^{-|v_j - \tilde{v}_j|} \right) \leq e^{C_1 R} |v_j - \tilde{v}_j|.
    \]
    Therefore, we get
    \[
    \begin{split}
    \iint |f_i - \tilde{f}_i|^p 
    \leq & 2C |A|^p \sum_{j} \frac{ M_j^{2p}}{\|h_j^{1/q}\|_{L^{1}}^{2pq}} \left( C_{TM} e^{\frac{1}{(q-1)^2 8\pi} C_2 R}\right)^{(q-1)2p} \\
    &\cdot C_{TM} e^{\frac{p^2}{8\pi} C_2 R} e^{pC_1 R} \|v_j - \tilde{v}_j\|_{L^p(M_T)} T\\
    \leq& \frac{1}{2nC_3} \|v - \tilde{v}\|_X.
    \end{split}
    \]
    This gives $\|\Phi(v) - \Phi(\tilde{v})\|_X = \|u-\tilde{u}\|_{X} \leq \frac{1}{2} \|v-\tilde{v}\|_X$ as desired.
\end{proof}

Next, we show that the above obtained solution is in fact smooth under suitable conditions for $h_j$ and $u_{i,0}$.
Note that $u_i \in W^{2,1}_p(M_T)$ for $p>2$ implies that $u_i \in C^{\alpha,\alpha/2}(M_T)$ for any $0<\alpha < 2-\frac{4}{p}$.

\begin{lem}
Let $u_j \in C^{\alpha,\alpha/2}(M_T)$ and $h_j \in C^{\alpha}(M)$ for all $j=1, \ldots, n$.
Then $f_i \in C^{\alpha,\alpha/2}(M_T)$ where $f_i$ is given in \eqref{eq f_i}.
\end{lem}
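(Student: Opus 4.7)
The plan is to verify Hölder regularity of $f_i$ by decomposing it into a sum of products/quotients of functions whose parabolic Hölder regularity can be checked step by step, relying on the algebra properties of the space $C^{\alpha,\alpha/2}(M_T)$.

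First, I would record the basic fact that $C^{\alpha,\alpha/2}(M_T) \hookrightarrow L^{\infty}(M_T)$, so each $u_j$ is bounded: there exists $K>0$ with $\|u_j\|_{L^\infty(M_T)} \le K$. Combined with the mean value theorem,
\[
|e^{u_j(x,t)} - e^{u_j(y,s)}| \le e^{K} |u_j(x,t) - u_j(y,s)|,
\]
which immediately gives $e^{u_j} \in C^{\alpha,\alpha/2}(M_T)$ with norm bounded by $e^{K} \|u_j\|_{C^{\alpha,\alpha/2}(M_T)}$. Since $h_j \in C^{\alpha}(M)$ trivially embeds into $C^{\alpha,\alpha/2}(M_T)$ (constant in time), and the product of two bounded functions in $C^{\alpha,\alpha/2}(M_T)$ again lies in $C^{\alpha,\alpha/2}(M_T)$ by the standard Leibniz-type estimate
\[
[fg]_{C^{\alpha,\alpha/2}} \le \|f\|_{L^\infty}[g]_{C^{\alpha,\alpha/2}} + \|g\|_{L^\infty}[f]_{C^{\alpha,\alpha/2}},
\]
we conclude $h_j e^{u_j} \in C^{\alpha,\alpha/2}(M_T)$.

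Next, I would study the purely time-dependent denominator $D_j(t) := \int_M h_j e^{u_j(\cdot,t)}$. Its $C^{\alpha/2}([0,T])$ regularity follows from
\[
|D_j(t)-D_j(s)| \le \int_M h_j \bigl|e^{u_j(x,t)}-e^{u_j(x,s)}\bigr|\, dx \le e^{K}\|h_j\|_{L^\infty}[u_j]_{C^{\alpha,\alpha/2}} |t-s|^{\alpha/2}.
\]
For the lower bound, I would invoke the estimate \eqref{eq h e^v lower} proven earlier (valid once we fix some $q>1$ and use the $L^\infty$ bound on $u_j$ in place of the gradient bound, or more simply the direct estimate $D_j(t) \ge e^{-K}\int_M h_j > 0$, which holds since $h_j \not\equiv 0$ and $h_j \ge 0$). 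Either way, $D_j(t) \ge \delta > 0$ uniformly in $t$, so $1/D_j \in C^{\alpha/2}([0,T]) \hookrightarrow C^{\alpha,\alpha/2}(M_T)$ with norm controlled via $|1/D_j(t) - 1/D_j(s)| \le \delta^{-2}|D_j(t)-D_j(s)|$.

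Finally, applying the product rule one more time, each quotient $\frac{h_j e^{u_j}}{D_j(t)}$ lies in $C^{\alpha,\alpha/2}(M_T)$, and hence so does the finite linear combination
\[
f_i = \sum_{j=1}^{n} a_{ij}\left(\frac{h_j e^{u_j}}{D_j(t)} - 1\right).
\]
I do not anticipate a serious obstacle; the only mildly delicate point is establishing the uniform positive lower bound on $D_j$, but since $u_j$ is bounded on $M_T$ and $h_j \not\equiv 0$, this follows directly without needing the Moser--Trudinger bound.
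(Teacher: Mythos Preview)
Your proposal is correct and takes essentially the same approach as the paper: bound $u_j$ in $L^\infty$, establish a uniform positive lower bound on $\int_M h_j e^{u_j}$, and then verify parabolic H\"older continuity of the numerator and of the reciprocal of the denominator. The paper carries out the space-only and time-only difference estimates for $f_i$ by hand rather than appealing to the algebra structure of $C^{\alpha,\alpha/2}(M_T)$, and it uses the H\"older-inequality bound \eqref{eq h e^v lower2} for the denominator instead of your simpler direct estimate $D_j(t)\ge e^{-K}\int_M h_j$, but the underlying argument is the same.
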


\begin{proof}
From the assumption for $u_j$, we let $R := \max_j \sup_{M_T} |u_j(x,t)|$.
First note that
\begin{equation} \label{eq h e^v lower2}
\int h_j e^{u_j} \geq \|h_j^{1/q}\|_{L^{1}}^{q} \left(\int e^{\frac{-1}{q-1} u_j} \right)^{-q+1} \geq \|h_j^{1/q}\|_{L^{1}}^{q} e^{-R}.
\end{equation}
Also note that for any $x,y \in M$ with $x \neq y$ and for any $t \in (0,T)$,
\[
\begin{split}
    |f_i(x,t)-f_i(y,t)| =& \sum_j \frac{a_{ij}}{\int h_j e^{u_j}(t)} \left| h_j(x) e^{u_j(x,t)} - h_j(y) e^{u_j(y,t)}\right|\\
    \leq& \sum_j \frac{a_{ij}}{\int h_j e^{u_j}(t)} \left( |h_j(x)-h_j(y)| e^{u_j(x,t)} + h_j(y) \left|e^{u_j(x,t)}-e^{u_j(y,t)} \right|\right)\\
    \leq& \sum_j \frac{a_{ij}e^{R}}{\|h_j^{1/q}\|_{L^{1}}^{q}} \left( e^R |h_j(x)-h_j(y)| + M_j e^R \left|u_j(x,t)- u_j(y,t) \right|\right)\\
    \leq& C |x-y|^{\alpha}
\end{split}
\]
because $h_j(\cdot),u_j(\cdot,t)$ are H{\"o}lder continuous.
Similarly, for any $t,s \in (0,T)$ with $t \neq s$ and for any $x \in M$,
\[
\begin{split}
    &|f_i(x,t)-f_i(x,s)|\\
    =& \sum_j \frac{a_{ij} h_j(x)}{\int h_j e^{u_j}(t) \int h_j e^{u_j}(s)} \left| \int h_j e^{u_j} (s) \cdot e^{u_j(x,t)} - \int h_j e^{u_j} (t) \cdot e^{u_j(x,s)}\right|\\
    \leq& \sum_j a_{ij}  \frac{M_j^2 e^{2R}}{\|h_j^{1/q}\|_{L^{1}}^{2q}} \left( \int_M |e^{u_j(s)} - e^{u_j(t)}| \cdot e^{u_j(x,t)} + \left| e^{u_j(x,t)} - e^{u_j(x,s)}  \right| \int_M e^{u_j}(t) \right)\\
    \leq& \sum_j a_{ij} \frac{M_j^2 e^{2R}}{\|h_j^{1/q}\|_{L^{1}}^{2q}} e^{2R} \left( \int_M |u_j(y,t)-u_j(y,s)| dy + |u_j(x,t)-u_j(x,s)| \right)\\
    \leq& C |t-s|^{\alpha/2}
\end{split}
\]
because $u_j(x,\cdot)$ is H{\"o}lder continuous.

This shows $f_i \in C^{\alpha,\alpha/2}(M_T)$ and the proof is complete.
\end{proof}

Therefore, by \eqref{eq sol Holder}, $u_i \in C^{2+\alpha,1+\alpha/2}(M_T)$ if $u_{i,0} \in C^{2,\alpha}(M)$ and $h_j \in C^{\alpha}(M)$.
This implies $f_i \in C^{2+\alpha,1+\alpha/2}(M_T)$.
By Schauder estimate and bootstrapping argument, 
we can conclude that $u_i \in C^\infty(M_T)$ if $h_j$ are smooth.

Finally, we show that $u \in C([0,T),W^{1,2}(M))$.
Set
\[E_i(t) = \frac{1}{2} \int_{M} |\nabla u_i(t)|^2\] for $t \in (0,T)$.
Note that for any $0<t_1<t_2<T$,
\[
\begin{split}
     (E_i(t_2) - E_i(t_1)) =& \frac{1}{2} \int_{t_1}^{t_2} \left( \partial_t \int_{M} |\nabla u_i(t)|^2 \right) dt\\
     =& \int_{t_1}^{t_2} \int_{M} \nabla u_i(t) \nabla \partial_t u_i(t)\\
     =& -\int_{t_1}^{t_2} \int_{M} |\partial_t u_i(t)|^2 + \int_{t_1}^{t_2} \int_{M}f_i \partial_t u_i(t)\\
     \leq& -\frac{1}{2} \int_{t_1}^{t_2} \int_{M} |\partial_t u_i(t)|^2 + \frac{1}{2} \int_{t_1}^{t_2} \int_{M} |f_i|^2\\
     \leq& C (t_2-t_1)
\end{split}
\]
where the constant $C$ above depends on $|A| = \max a_{ij}$, $M_j$, $\| h_j\|_{L^{1/q}}$, $q$, $C_{TM}$, $C_2$ and $\sup_{M_T} u_j$, from the proof of Lemma \ref{lem ball}.
Hence $E_i(t)$ is uniformly continuous on $(0,T)$ and is therefore bounded on $(0,T)$.

Now by contradiction, assume that for some $i$, $u_i(t)$ is not continuous at $t=0$ in $W^{1,2}(M)$ norm.
Then there exists $t_n \to 0$ and $\varepsilon>0$ such that
\begin{equation} \label{eq u_i not conti}
\|u_i(t_n)-u_{i,0}\|_{W^{1,2}(M)} \geq \varepsilon.
\end{equation}
Since $E_i(t)$ is bounded, we can find a subsequence, still denoted by $(t_n)$, such that $u_i(t_n)$ converges weakly in $W^{1,2}(M)$, which implies that $u_i(t_n)$ converges strongly in $L^2(M)$.
This limit is $u_{i,0}$ and so we have $u_i(t_n) \to u_{i,0}$ weakly in $W^{1,2}(M)$.
By Proposition \ref{prop BEW} and lower semicontinuity, we get
\[
\limsup_{n \to \infty} \|u_i(t_n)\|_{W^{1,2}(M)} \leq \|u_{i,0}\|_{W^{1,2}(M)} \leq \liminf_{n \to \infty} \|u_i(t_n)\|_{W^{1,2}(M)}
\]
which implies $\|u_i(t_n)\|_{W^{1,2}(M)} \to \|u_{i,0}\|_{W^{1,2}(M)}$ and hence $u_i(t_n) \to u_{i,0}$ strongly in $W^{1,2}(M)$.
This contradicts \eqref{eq u_i not conti}.

In summary, we obtain

\begin{prop}
    Let $u_{i,0} \in W^{2,p}(M)$ with $\int_{M} u_{i,0} = 0$ and $h_j$ be smooth on $M$.
    Then the solution $u_i$ obtained in \ref{thm short time} is smooth on $M_T$.
    Moreover, $u_i \in C([0,T),W^{1,2}(M))$.
\end{prop}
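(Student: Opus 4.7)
The plan is to combine the short-time existence solution from Theorem \ref{thm short time} with parabolic Schauder estimates for the smoothness claim, and to use an energy identity together with a weak-compactness argument for the continuity claim at $t = 0$.

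First, for smoothness on $M_T$: the short-time existence gives $u_i \in W^{2,1}_p(M_T)$ for some $p > 2$, and the anisotropic Sobolev embedding stated before Proposition \ref{prop BEW} yields $u_i \in C^{\alpha,\alpha/2}(M_T)$ for some $\alpha>0$. Since $h_j$ is smooth, the lemma just proved shows $f_i \in C^{\alpha,\alpha/2}(M_T)$, so the Hölder version of the linear heat-equation proposition (estimate \eqref{eq sol Holder}) upgrades $u_i$ to $C^{2+\alpha,1+\alpha/2}(M_T)$. I would then bootstrap: the enhanced regularity of $u_i$, combined with smoothness of $h_j$ and the uniform positive lower bound \eqref{eq h e^v lower2} on $\int h_j e^{u_j}$, gives $f_i$ increased Hölder regularity, and iterating the Schauder estimate produces $u_i \in C^\infty(M_T)$.

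Second, for the continuity $u_i \in C([0,T),W^{1,2}(M))$: on $(0,T)$ the solution is smooth, so differentiating $E_i(t)=\tfrac12\int_M|\nabla u_i(t)|^2$, integrating by parts, and applying Young's inequality to $\int f_i\,\partial_t u_i$ give the energy identity
\[
E_i(t_2)-E_i(t_1)\le -\tfrac12\!\int_{t_1}^{t_2}\!\!\int_M|\partial_t u_i|^2+\tfrac12\!\int_{t_1}^{t_2}\!\!\int_M|f_i|^2\le C(t_2-t_1),
\]
where $C$ depends only on the bounds controlling $\|f_i\|_{L^p(M_T)}$ from Lemma \ref{lem ball}. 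Hence $E_i$ is uniformly continuous, and in particular bounded, on $(0,T)$.

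Third, continuity at $t=0$ in $W^{1,2}$ is the delicate step, and I would argue by contradiction. If $u_i(t_n)\not\to u_{i,0}$ in $W^{1,2}(M)$ along some $t_n\to 0^+$, boundedness of $E_i$ lets me extract a subsequence converging weakly in $W^{1,2}(M)$, hence strongly in $L^2(M)$; the $L^2$ limit must be $u_{i,0}$ because $u_i\in W^{2,1}_p(M_T)\hookrightarrow C([0,T],L^2(M))$. Lower semicontinuity then gives $\|u_{i,0}\|_{W^{1,2}}\le\liminf_n\|u_i(t_n)\|_{W^{1,2}}$, while the second part of Proposition \ref{prop BEW} supplies the reverse $\limsup$ bound $\|u_{i,0}\|_{W^{1,2}}\ge\limsup_n\|u_i(t_n)\|_{W^{1,2}}$. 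Together these force norm convergence, which combined with weak convergence in a Hilbert space upgrades to strong convergence in $W^{1,2}$, contradicting the assumption. The main obstacle is exactly this last dovetailing: the $\limsup$ inequality from the linear theory must be invoked in the correct direction to complement the generic lower semicontinuity, and verifying that the weak $L^2$ limit is indeed $u_{i,0}$ relies on the continuity-in-time already built into the $W^{2,1}_p$ framework.
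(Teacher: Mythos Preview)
Your proposal is correct and follows essentially the same approach as the paper: the smoothness is obtained by the Sobolev embedding $W^{2,1}_p \hookrightarrow C^{\alpha,\alpha/2}$, the lemma on H\"older regularity of $f_i$, estimate \eqref{eq sol Holder}, and Schauder bootstrapping; the $W^{1,2}$-continuity at $t=0$ is handled by exactly the energy identity you wrote, boundedness of $E_i$, and the contradiction argument combining weak $W^{1,2}$-compactness, lower semicontinuity, and the $\limsup$ inequality from Proposition \ref{prop BEW}. Your identification of the key dovetailing step at $t=0$ matches the paper's reasoning precisely.
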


\section{Global existence}
\label{sec4}

In this section, we show the global existence.
The result is not direct.
In fact, we can easily show the uniform lower bound, while showing uniform upper bound is much more difficult.

For example, we can let
\[
X(x,t) = \sum_i e^{u_i}>0.
\]
From \eqref{eq nabla u bdd}, we have
\[
\sum_j\int_M |\nabla u_j|^2(t) \leq  C_0 < \infty.
\]
Now, using \eqref{eq h e^v lower} and replacing $C_2 R$ by $C_0 $, we can estimate the equation by 
\[
\begin{split}
\partial_t u_i =& \Delta u_i + \sum_{j} a_{ij} \left( \frac{h_j e^{u_j} - \int h_j e^{u_j}}{\int h_j e^{u_j}} \right)\\
\leq& \Delta u_i + |A| \sum_j \frac{ h_j e^{u_j} + \int h_j e^{u_j}}{\|h_j^{1/q}\|_{L^{1}}^{q}} \left( C_{TM} e^{\frac{1}{(q-1)^2 8\pi} C_0}\right)^{q-1} \\
\leq& \Delta u_i + |A| \sum_j \frac{M_j}{\|h_j^{1/q}\|_{L^{1}}^{q}} \left( C_{TM} e^{\frac{1}{(q-1)^2 8\pi} C_0}\right)^{q-1} \sum_j\left( e^{u_j} + \int e^{u_j} \right).
\end{split}
\]
Multiplying with $e^{u_i}$ and sum with $i$, we have
\[
X_t \leq \Delta X + \left( |A| \sum_j \frac{M_j}{\| h_j^{1/q}\|_{L^{1}}^{q}} \left( C_{TM} e^{\frac{1}{(q-1)^2 8\pi}  C_0}\right)^{q-1}  \right) \left( X^2 + X \int X \right).
\]
But this inequality does not lead to the uniform boundedness due to the square growth in the RHS.
Recall that the harmonic map flow satisfies stronger inequality
\[
|u_t - \Delta u| = |A(du,du)| \leq C |du|^2
\]
and may develop finite time blow up, see Struwe \cite{struwe-1} or Chang-Ding-Ye \cite{chang-ding-ye}.

In our case, however, we can obtain a uniform boundedness.
To get the result, we first describe the boundedness of $u_i$ at some time $t_T$.
Then by using Moser iteration, we obtain uniform boundedness of $u_i$.
Together with the uniqueness property, we will get global existence for $u_i$.

\begin{lem} \label{lem unif bound-T}
    Let $u_i$ be a solution of \eqref{eq IVP2} on $M \times [0,T_0]$.
    Assume $T_0 \geq 1$.
    For any $0<T \leq  T_0-1 $, there exists $t_T \in [T,T+1)$ and a constant $C_4>0$ independent on $T$ such that
    \begin{equation}
        \|u_i(t_T)\|_{L^{\infty}(M)} \leq C_4.
    \end{equation}
\end{lem}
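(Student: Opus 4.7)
The plan is to pick a well-chosen instant $t_T\in[T,T+1)$ at which the time-derivative $\partial_t u_i(t_T)$ has uniformly bounded $L^2$-norm, then freeze the system at that instant and promote an $L^2$-bound on $\Delta u_i(t_T)$ to an $L^\infty$-bound on $u_i(t_T)$ via elliptic regularity and Sobolev embedding in two dimensions.

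To extract $t_T$, I would exploit the dissipation built into the entropy. Lemma~\ref{lem K dec} together with \eqref{eq cond A^-1} gives the pointwise identity
\[
-K'(t)\;=\;\sum_{i,j}\int_M a^{ij}\,\partial_t u_i\,\partial_t u_j\;\geq\;\lambda\sum_i\int_M|\partial_t u_i|^2.
\]
Integrating on $[T,T+1]$ and combining the monotonicity $K(T)\leq K(0)$ with the lower bound $K(T+1)\geq -C_M$ from \eqref{eq K lower bd} yields
\[
\int_T^{T+1}\sum_i\|\partial_t u_i(t)\|_{L^2(M)}^2\,dt\;\leq\;\lambda^{-1}\bigl(K(0)+C_M\bigr).
\]
A pigeonhole choice then produces $t_T\in[T,T+1)$ with $\sum_i\|\partial_t u_i(t_T)\|_{L^2(M)}^2\leq \lambda^{-1}(K(0)+C_M)=:C_5$, a constant independent of $T$.

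At this $t_T$ the system rewrites as the spatial equation
\[
\Delta u_i(t_T)\;=\;\partial_t u_i(t_T)\;-\;\sum_j a_{ij}\Bigl(\frac{h_j e^{u_j(t_T)}}{\int h_j e^{u_j(t_T)}}-1\Bigr),
\]
whose right-hand side I claim is bounded in $L^2(M)$ uniformly in $T$. Lemma~\ref{lem Struwe}, applied with the bound $\int|\nabla u_j(t_T)|^2\leq C_0$ from \eqref{eq nabla u bdd}, controls $\|e^{u_j(t_T)}\|_{L^q}$ for every $q<\infty$; meanwhile the Hölder argument producing \eqref{eq h e^v lower}, repeated with $C_2R$ replaced by the uniform constant $C_0$, keeps $\int h_j e^{u_j(t_T)}$ bounded away from zero. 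Hence $\|\Delta u_i(t_T)\|_{L^2(M)}\leq C$ with $C$ independent of $T$. Since $\int_M u_i(t_T)=0$, standard elliptic regularity on the closed surface $M$ gives $\|u_i(t_T)\|_{W^{2,2}(M)}\leq C$, and the two-dimensional Sobolev embedding $W^{2,2}(M)\hookrightarrow C^0(M)$ (via $W^{2,2}\hookrightarrow W^{1,q}$ for any $q<\infty$ followed by Morrey) produces the announced bound $\|u_i(t_T)\|_{L^\infty(M)}\leq C_4$.

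The main obstacle is the selection of $t_T$: at a generic time there is no $L^2$-control on $\partial_t u_i$, so the elliptic step cannot proceed. The dissipative structure of the monotone entropy $K$ is precisely what converts the uniform $W^{1,2}$-bound \eqref{eq nabla u bdd} into a sequence of good instants where the solution is \emph{also} uniformly bounded in $L^\infty$. This seed bound is exactly what one would then feed into the Moser iteration announced for the remainder of the section to upgrade it to a pointwise bound at every time.
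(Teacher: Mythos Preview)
Your proposal is correct and follows essentially the same route as the paper's proof: entropy dissipation $-K'(t)\geq\lambda\sum_i\|\partial_t u_i\|_{L^2}^2$ yields a good instant $t_T\in[T,T+1)$, the equation at $t_T$ converts this into an $L^2$-bound on $\Delta u_i(t_T)$ via the Moser--Trudinger estimate \eqref{eq e^pu} and the lower bound \eqref{eq h e^v lower}, and elliptic regularity plus the two-dimensional embedding $W^{2,2}\hookrightarrow L^\infty$ finishes. The only cosmetic differences are that the paper integrates the dissipation over $[0,T]$ rather than $[T,T+1]$ (equivalent via $K(T)\leq K(0)$) and explicitly invokes Poincar\'e to control $\|u_i(t_T)\|_{L^2}$ in the $W^{2,2}$-norm, which you absorb into the phrase ``standard elliptic regularity''; also beware that the paper reserves the label $C_5$ for the constant in Theorem~\ref{thm unif bound}.
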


\begin{proof}
Together with \eqref{eq nabla u bdd} and Lemma \ref{lem Struwe}, and the fact $\bar{u_i} = \int_{M} u_i = 0$, for any $p>0$, we get
\begin{equation} \label{eq e^pu}
\int_{M} e^{pu_i}(t) \leq C_{TM} e^{\frac{p^2}{8\pi} C_0}.
\end{equation}
Also, by Poincar{\'e} inequality, we have that for any $t$,
\begin{equation} \label{eq poincare}
    \sum_{i} \|u_i(t)\|_{L^2(M)}^2 = \sum_{i} \|u_i - \bar{u}_i\|_{L^2(M)}^2 \leq C \sum_{i} \|\nabla u_i(t)\|_{L^2(M)}^2 \leq C  C_0.
\end{equation}

Next, from \eqref{eq cond A^-1} and Lemma \ref{lem K dec}, for any $T$,
\[
\begin{split}
\sum_{i}\int_{0}^{T} \int_{M} |\partial_t u_i |^2 \leq& \int_{0}^{T} \lambda^{-1} \sum_{i,j} \int_{M} a^{ij} \partial_t u_i \partial_t u_j\\
=& \lambda^{-1} \left( K(0) - K(T) \right)\\
\leq& \lambda^{-1} \left( K(0) +  C_M \right)\\
=& \lambda^{-1} c_0^{-1} C_0<\infty.
\end{split}
\]
Then for any $T$ with $0 < T \leq T_0 -1$, there exists $t_T \in [T,T+1)$ such that
\[
\begin{split}
\sum_{i} \int_{M} |\partial_t u_i|^2(t_T) =
\inf_{t \in [T,T+1)} \sum_{i} \int_{M} |\partial_t u_i|^2 (t) \leq  \lambda^{-1} c_0^{-1} C_0.
\end{split}
\]
Therefore, using $f_i = \sum_{j} a_{ij} \left( \frac{h_j e^{u_j}}{\int h_j e^{u_j}} - 1\right)$, and by \eqref{eq nabla u bdd}, we have
\[
\begin{split}
    &\sum_{i} \|\Delta u_i(t_T)\|_{L^2(M)}^2 \leq \sum_{i} \|\partial_t u_i(t_T)\|_{L^2(M)}^2 + \sum_{i} \|f_i(t_T)\|_{L^2(M)}^2\\
    \leq& \lambda^{-1} c_0^{-1} C_0 + nC |A|^2 \sum_{j} \left( \frac{M_j^2}{\|h_j^{1/q}\|_{L^{1}}^{2q}} \left( C_{TM} e^{\frac{1}{(q-1)^2 8\pi}  C_0}\right)^{2(q-1)}  C_{TM} e^{\frac{1}{2\pi}  C_0} + 1\right)
\end{split}
\]
as in the proof of Lemma \ref{lem ball}.
Here, we replace $C_2 R$ by $ C_0$.


Now by Sobolev embedding $W^{2,2} \hookrightarrow L^{\infty}$ and elliptic regularity, (or Calderon-Zygmund theory), we obtain
\[
\begin{split}
&\sum_{i} \|u_i(t_T)\|_{L^{\infty}(M)}^2\\
\leq& C \sum_{i} \|u_i(t_T)\|_{W^{2,2}(M)}^2\\
    \leq& C \sum_i \left( \|\Delta u_i(t_T)\|_{L^2(M)}^2 + \|u_i(t_T)\|_{L^2(M)}^2\right)\\
    \leq&  C \left( \lambda^{-1} c_0^{-1} C_0 + nC |A|^2 \sum_{j} \left( \frac{M_j^2}{\|h_j^{1/q}\|_{L^{1}}^{2q}} \left( C_{TM} e^{\frac{1}{(q-1)^2 8\pi}  C_0}\right)^{2(q-1)}  C_{TM} e^{\frac{1}{2\pi}  C_0} + 1\right) \right) + C  C_0\\
    =:& C_4^2.
\end{split}
\]
This completes the proof.
\end{proof}

Now we are ready to prove uniform boundedness for $u_i$ using Moser iteration.

\begin{thm} \label{thm unif bound}
    Let $u_i$ be a solution of \eqref{eq IVP2} on $M_T$.
    Then there exists a constant $C_5>0$ independent on $T$ such that for all $t \in (0,T)$,
    \begin{equation}
        \|u_i(t)\|_{L^{\infty}(M)} \leq C_5.
    \end{equation}
\end{thm}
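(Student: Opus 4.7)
The plan is to combine Lemma \ref{lem unif bound-T} with a parabolic Moser iteration on a unit-length time slab. For the reduction: fix any $t^\ast \in (0,T)$ at which we wish to bound $u_i$. If $t^\ast > 1$, applying Lemma \ref{lem unif bound-T} with parameter $t^\ast - 1$ produces a time $t_T \in [t^\ast - 1, t^\ast)$ with $\|u_i(t_T)\|_{L^\infty(M)} \leq C_4$ and $C_4$ independent of $T$; if $t^\ast \leq 1$, the smoothness of $u_{i,0}$ together with the short-time bounds of Section \ref{sec3} provide the analogous estimate. It therefore suffices to propagate the $L^\infty$ bound from $t_T$ forward by one unit of time, with a constant depending only on $T$-independent data.

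A preliminary step is to verify that the right-hand side $f_i$ is uniformly bounded in $L^p(M)$ for every finite $p$ and every $t \in [t_T, t_T + 1]$. The denominator $\int h_j e^{u_j}$ has the uniform positive lower bound from \eqref{eq h e^v lower} (with $C_2 R$ replaced by $C_0$), while $\int_M e^{p u_j}$ is controlled by \eqref{eq e^pu}, which itself is a consequence of Lemma \ref{lem Struwe} together with \eqref{eq nabla u bdd}. Combining these with $h_j \in L^\infty$ yields uniform $L^p$-in-space estimates on $f_i$ whose constants depend only on $C_0$, $A$, and the fixed data $h_j$.

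With these ingredients I would run the Moser iteration on $v := (u_i - C_4)_+$. Note $v(\cdot,t_T) \equiv 0$ because $\|u_i(t_T)\|_{L^\infty} \leq C_4$, and on $\{v > 0\}$ one has $\partial_t v - \Delta v = f_i$. Testing with $v^{2m-1}$ gives, for every integer $m \geq 1$,
\[
\frac{1}{2m}\,\frac{d}{dt}\int_M v^{2m} \;+\; \frac{2m-1}{m^2}\int_M |\nabla v^m|^2 \;\leq\; \int_M f_i\, v^{2m-1}.
\]
Hölder's inequality controls the right-hand side in terms of $\|f_i\|_{L^{p}}$ and a higher norm of $v$, while the 2D Sobolev embedding $H^1(M) \hookrightarrow L^q(M)$ (valid for every $q<\infty$) converts the gradient term into an $L^{mq}$ control on $v$. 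The standard iteration with $p_{k+1} = r p_k$, $r>1$, then produces a geometrically convergent sequence of estimates and delivers $\|v\|_{L^\infty(M\times [t_T, t_T+1])} \leq C_5 - C_4$, with $C_5$ depending only on $C_4$ and the uniform $L^p$ bounds on $f_i$. In particular $u_i(t^\ast) \leq C_5$, and the corresponding lower bound follows either from the parallel iteration applied to $(-u_i - C_4)_+$ or from the simpler argument indicated at the start of Section \ref{sec4}.

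The main obstacle I foresee is ensuring that every constant arising along the Moser iteration — in Hölder's inequality, in the Sobolev embedding at each level, and in the Gronwall-in-time step — can be traced back to $T$-independent quantities. This ultimately rests on the monotonicity of the entropy (Lemma \ref{lem K dec}) and Moser–Trudinger-type control (Lemma \ref{lem Struwe}), which together furnish the uniform bounds on $C_0$, $C_4$, and the $L^p$ norms of $f_i$. A secondary subtlety is the coupling between the components $u_j$ through $f_i$, but this is effectively neutralized before the iteration begins because the exponential factors $e^{u_j}$ enter only through their $L^p$ norms, so the iteration decouples across the index $i$.
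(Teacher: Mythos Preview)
Your approach is correct but takes a genuinely different route from the paper's. The paper keeps the components coupled throughout: it multiplies the equation by $u_i^{2p-1}$, sums over $i$, and estimates the cross term $\int e^{u_j}u_i^{2p-1}$ by H\"older at every step, arriving at an inequality of the form $U_{4p}\leq C\,U_{4\gamma(p-1/2)}^{(p-1/2)/p}$ with $\gamma\in(\tfrac12,1)$; this is a \emph{downward} iteration, terminating at an $L^2$-type norm controlled by Poincar\'e. Only after obtaining a $T$-dependent bound does the paper invoke Lemma~\ref{lem unif bound-T}, restarting the flow at $t_{T'}$ on a slab of length at most~$2$ to make the constant $T$-independent. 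You reverse this order: Lemma~\ref{lem unif bound-T} comes first, the truncation $v=(u_i-C_4)_+$ gives zero data at $t_T$, and since the exponentials $e^{u_j}$ are absorbed into a uniform $L^p$ bound on $f_i$ \emph{before} any iteration begins, the system decouples and you can run the standard upward Moser scheme on each component separately. Your route is more modular---indeed, once $f_i\in L^p(M)$ uniformly in $t$ and $v(t_T)=0$ on a slab of length $\le 1$, you could bypass Moser iteration entirely by citing Proposition~\ref{prop BEW} together with the embedding $W^{2,1}_p\hookrightarrow C^{\alpha,\alpha/2}$ for $p>2$. The paper's argument, by contrast, is self-contained but demands the somewhat delicate two-case analysis and the product-of-constants estimate for the downward chain.
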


\begin{proof}
As in the beginning of this section, we have
\[
\begin{split}
\partial_t u_i 
\leq& \Delta u_i + |A| \sum_j \frac{M_j}{\|h_j^{1/q}\|_{L^{1}}^{q}} \left( C_{TM} e^{\frac{1}{(q-1)^2 8\pi} C_0}\right)^{q-1}   \sum_j \left( e^{u_j} + \int e^{u_j}\right).
\end{split}
\]
Fix $\gamma \in (\frac{1}{2},1)$.
As above, for $p \geq 1$, multiply with $u_i^{2p-1}$, sum with $i$, and integrate to get
\[
\begin{split}
    &\frac{d}{dt} \left( \frac{1}{2p}  \sum_{i} \int_{M} u_i^{2p} \right)\\
    \leq& \sum_i \int_{M} \Delta u_i u_i^{2p-1} \\
    &+ |A| \sum_j  \frac{M_j}{\|h_j^{1/q}\|_{L^{1}}^{q}} \left( C_{TM} e^{\frac{1}{(q-1)^2 8\pi}  C_0}\right)^{q-1}  \sum_{i,j} \left( \int_{M} e^{u_j} u_i^{2p-1} + \int_{M} e^{u_j} \int_{M} u_i^{2p-1}\right).
\end{split}
\]
Now using $\int_{M} \Delta u_i u_i^{2p-1} = -\frac{1}{p} \int_{M} |\nabla (u_i^p)|^2$ and H{\"o}lder inequalities
\[
\begin{split}
\sum_{i,j}\int_M a_i b_j \leq& \left( \sum_{i,j} \int_{M} a_i^p \right)^{\frac{1}{p}} \left( \sum_{i,j} \int_{M} b_j^q \right)^{\frac{1}{q}} \leq n \left( \sum_{i} \int_{M} a_i^p \right)^{\frac{1}{p}}  \left( \sum_{j} \int_{M} b_j^q \right)^{\frac{1}{q}}\\
\sum_{i,j} \int_{M} a_i \int_{M} b_j \leq& \left( \sum_{i} \int_{M} a_i^p \right)^{\frac{1}{p}}  \left( \sum_{j} \int_{M} b_j^q \right)^{\frac{1}{q}},
\end{split}
\]
we get, by \eqref{eq e^pu},
\[
\begin{split}
    &\frac{d}{dt} \left( \frac{1}{2}\sum_i \int_{M} u_i^{2p} \right) +  \sum_i \int_{M} |\nabla (u_i^p)|^2\\
    \leq& (n+1)p |A| \sum_j \frac{M_j}{\|h_j^{1/q}\|_{L^{1}}^{q}} \left( C_{TM} e^{\frac{1}{(q-1)^2 8\pi} C_0}\right)^{q-1}  \left( \sum_j \int_{M} e^{\frac{2\gamma}{2\gamma-1}u_j} \right)^{\frac{2\gamma-1}{2\gamma}}  \left( \sum_i \int_{M} u_i^{4\gamma (p-\frac{1}{2})}\right)^{\frac{1}{2\gamma}}\\
    =:& p C_{\gamma} \left( \sum_i \int_{M} u_i^{4\gamma (p-\frac{1}{2})}\right)^{\frac{1}{2\gamma}}.
\end{split}
\]
Integrating over $[0,t] \subset [0,T]$ and taking supremum over $t$, we have
\begin{equation} \label{eq 2p_1}
\begin{split}
\sup_{t \in [0,T]} \sum_i \frac{1}{2} \int_{M} u_i^{2p}(t) &+ \sum_i \int_{0}^{T} \int_{M} |\nabla (u_i^p)|^2 \\
\leq& \sum_i \frac{1}{2} \int_{M} u_i^{2p}(0) + p C_\gamma T^{\frac{2\gamma-1}{2\gamma}} \left( \sum_i \int_{0}^{T} \int_{M} u_i^{4\gamma (p-\frac{1}{2})}\right)^{\frac{1}{2\gamma}}.
\end{split}
\end{equation}

Let $v_i = u_i^p$.
Choose a finite covering $ \{B_{2r}^{\ell} \}_{\ell=1}^{L}$ of $M$ consisting of balls of radius $2r$ such that $ \{B_{r}^{\ell} \}_{\ell=1}^{L}$ is also a covering of $M$.
Fix one of the balls $B_{2r}$ and a cut-off function $\phi \in C_c^{\infty}(B_{2r})$ such that $\phi \equiv 1$ on $B_{r}$ and that $|\nabla \phi| \leq \frac{2}{r}$.
From the Gagliardo-Nirenberg inequality $\|f\|_{L^2} \leq C_S\|\nabla f\|_{L^1}$ for all compactly supported $f \in W^{1,1}_0$, we have
\[
\begin{split}
    \left( \int_{B_{2r}} v_i^4 \phi^4 \right)^{\frac{1}{2}} \leq& C_S \int_{B_{2r}} |\nabla (v_i^2 \phi^2)| \leq 2C_S \int_{B_{2r}} v_i |\nabla v_i| \phi^2 + v_i^2 \phi |\nabla \phi|\\
    \leq& 2C_S \left( \left( \int_{B_{2r}} v_i^2 \phi^2 \right)^{\frac{1}{2}} \left( \int_{B_{2r}} |\nabla v_i|^2 \phi^2\right)^{\frac{1}{2}} + \frac{2}{r} \int_{B_{2r}} v_i^2 \right)\\
    \sum_{i}\int_{B_{r}} v_i^4 \leq& 8C_S^2 \left( \left(  \sup_{t \in [0,T]} \sum_i\int_{M} v_i^2 \right) \sum_i \int_{M} |\nabla v_i|^2 + \frac{4}{r^2} \left(  \sup_{t \in [0,T]}\sum_i \int_{M} v_i^2 \right)^2 \right) 
\end{split}
\]
Adding over all balls and integrating over $[0,T]$, we finally get
\[
\begin{split}
    \sum_i \iint_{M_T} v_i^4 \leq& 8LC_S^2  \left( \sup_{t \in [0,T]} \sum_i \int_{M} v_i^2 \right) \sum_i\iint_{M_T} |\nabla v_i|^2 \\
    & + 8LC_S^2  \frac{4T}{r^2} \left( \sup_{t \in [0,T]} \sum_i\int_{M} v_i^2 \right)^2 \\
    \left( \sum_{i}\iint_{M_T} v_i^{4} \right)^{\frac{1}{2}} \leq&  \left( 8LC_S^2 \right)^{\frac{1}{2}}  \left( \sup_{t \in [0,T]} \sum_i \int_{M} v_i^2 \right)^{\frac{1}{2}} \left( \sum_i\iint_{M_T} |\nabla v_i|^2 \right)^{\frac{1}{2}} \\
    &+ (8LC_S^2)^{\frac{1}{2}} \left( \frac{4T}{r^2} \right)^{\frac{1}{2}} \left( \sup_{t \in [0,T]} \sum_i \int_{M} v_i^2\right)\\
    \leq& (8LC_S^2)^{\frac{1}{2}} \left(  \left( 1 + \left(\frac{4T}{r^2} \right)^{\frac{1}{2}} \right)\left(  \sup_{t \in [0,T]} \sum_i \int_{M} v_i^2 \right) + \sum_i \iint_{M_T} |\nabla v_i|^2 \right).
\end{split}
\]


Together with \eqref{eq 2p_1}, we have
\[
\begin{split}
\left( \sum_i \iint_{M_T} u_i^{4p} \right)^{\frac{1}{4p}} \leq &(8LC_S^2)^{\frac{1}{4p}} \left( 1 + \sqrt{4Tr^{-2}}  \right)^{\frac{1}{2p}} \\
& \cdot \left( \sum_i \frac{1}{2} \int_{M} u_{i,0}^{2p} + p C_\gamma T^{\frac{2\gamma -1}{2\gamma}} \left( \sum_i \iint_{M_T} u_i^{4\gamma (p-\frac{1}{2})}\right)^{\frac{1}{2\gamma}} \right)^{\frac{1}{2p}}.
\end{split}
\]

Now denote
\begin{equation}
U_p = \left( \sum_i \iint_{M_T} u_i^p\right)^{\frac{1}{p}}.
\end{equation}
Also denote $U_0 = \sup_p \sum_i \frac{1}{2} \|u_{i,0}\|_{L^{2p}} < \infty$.
Then the above inequality becomes
\begin{equation} \label{eq iter}
U_{4p} \leq (8LC_S^2)^{\frac{1}{4p}}  \left( 1 + \sqrt{4Tr^{-2}}  \right)^{\frac{1}{2p}} \left( U_0^{2p} + p C_\gamma T^{\frac{2\gamma-1}{2\gamma}} U_{4\gamma (p-\frac{1}{2})}^{2(p-\frac{1}{2})} \right)^{\frac{1}{2p}}.
\end{equation}

Now we have two cases.

{\underline{Case 1}}:

If there are $p_n \to \infty$ such that $U_0^{2p_n} \geq p_n C_\gamma T^{\frac{2\gamma-1}{2\gamma}} U_{4\gamma (p_n-\frac{1}{2})}^{2(p_n-\frac{1}{2})}$, then we have 
\[
U_{4p_n} \leq  (32LC_S^2)^{\frac{1}{4p_n}}  \left( 1 + \sqrt{4Tr^{-2}}  \right)^{\frac{1}{2p_n}} U_0 \leq  (32LC_S^2)^{\frac{1}{4 p_1}}  \left( 1 + \sqrt{4Tr^{-2}}  \right)^{\frac{1}{2p_1}} U_0 < \infty.
\]
This implies that, $\|u_i\|_{L^{\infty}(M_T)} \leq  (32LC_S^2)^{\frac{1}{4 p_1}}  \left( 1 + \sqrt{4Tr^{-2}}  \right)^{\frac{1}{2p_1}} U_0  < \infty$ by taking $n \to \infty$, and we are done.

{\underline{Case 2}}:

For the second case, we may assume that there exists $p_*>0$ such that for all $p \geq p_*$, $U_0^{2p} \leq p C_\gamma T^{\frac{2\gamma-1}{2\gamma}}U_{4\gamma (p-\frac{1}{2})}^{2(p-\frac{1}{2})}$.
Then \eqref{eq iter} becomes
\begin{equation} \label{eq iter2}
U_{4p} \leq C_{T,r,\gamma}^{\frac{1}{2p}} (2p)^{\frac{1}{2p}} U_{4\gamma(p-\frac{1}{2})}^{\frac{p-\frac{1}{2}}{p}}
\end{equation}
where
\[
C_{T,r,\gamma} = \sqrt{8L}C_S (1 + \sqrt{4Tr^{-2}}) C_\gamma T^{\frac{2\gamma-1}{2\gamma}}.
\]
Fix $p_0 \geq p_*$.
Inductively we define $p_0 > p_1 > \ldots > p_{k+1} > 0$ by $p_{i+1} = \gamma (p_i - \frac{1}{2})$ and $0 < 4p_{k+1} \leq 2$.
For any $i=1, \ldots, k+1$, we can rewrite
\[
p_i = \gamma^i p_0 - \frac{\gamma^i + \gamma^{i-1} + \ldots + \gamma}{2} = \gamma^i p_0 - \frac{\gamma}{2} \frac{1-\gamma^i}{1-\gamma}.
\]
And at $i=k+1$, we have that
\begin{equation} \label{eq gamma k}
0 < 4p_{k+1} \leq 2 \Longleftrightarrow 0 < 2\gamma^k p_0 - \frac{1 - \gamma^{k+1}}{1-\gamma} \leq \frac{1}{\gamma} \Longleftrightarrow \gamma < \gamma^{k+1} (\gamma + 2(1-\gamma)p_0) \leq 1.
\end{equation}

Then \eqref{eq iter2} becomes
\[
\begin{split}
U_{4p_0} \leq& C_{T,r,\gamma}^{\frac{1}{2p_0}} (2p_0)^{\frac{1}{2p_0}} U_{4p_1}^{\frac{p_1}{\gamma p_0}}\\
\leq& C_{T,r,\gamma}^{\frac{1}{2p_0}} (2p_0)^{\frac{1}{2p_0}} \left( C_{T,r,\gamma}^{\frac{1}{2p_1}} (2p_1)^{\frac{1}{2p_1}}U_{4p_2}^{\frac{p_2 }{\gamma p_1}} \right)^{\frac{p_1}{\gamma p_0}}\\
 =& C_{T,r,\gamma}^{\frac{1}{2p_0} + \frac{1}{2\gamma p_0}} (2p_0)^{\frac{1}{2p_0}} (2p_1)^{\frac{1}{2 \gamma p_0}} U_{4p_2}^{\frac{p_2}{\gamma^2 p_0}}.
\end{split}
\]
By induction, we get
\[
U_{4p_0} \leq C_{T,r,\gamma}^{\frac{1}{2p_0} + \frac{1}{2\gamma p_0} + \ldots + \frac{1}{2 \gamma^k p_0}}  (2p_0)^{\frac{1}{2p_0}} (2p_1)^{\frac{1}{2 \gamma p_0}} \cdot \ldots \cdot (2p_k)^{\frac{1}{2 \gamma^k p_0}} U_{4p_{k+1}}^{\frac{p_{k+1}}{\gamma^{k+1} p_0}}.
\]

For the first exponent, by \eqref{eq gamma k}, note that
\[
\begin{split}
    \frac{1}{2p_0} + \frac{1}{2\gamma p_0} + \ldots + \frac{1}{2 \gamma^k p_0} =& \frac{1}{2p_0} \left( 1 + \gamma^{-1} + \ldots + \gamma^{-k} \right)\\
    =& \frac{1}{2p_0} \frac{\gamma^{-(k+1)} - 1}{\gamma^{-1}-1}\\
    <& \frac{1}{2p_0} \frac{1}{\gamma^{-1} - 1} \frac{ 2(1-\gamma) p_0}{\gamma} = 1.
\end{split}
\]
For the last exponent, again by \eqref{eq gamma k}, we get
\[
\begin{split}
    \frac{p_{k+1}}{\gamma^{k+1} p_0} =& \frac{\gamma^{k+1} p_0 - \frac{\gamma}{2} \frac{1-\gamma^{k+1}}{1-\gamma}}{\gamma^{k+1} p_0} < 1.
\end{split}
\]
From \eqref{eq poincare} and $4p_{k+1} \leq 2$, we have $U_{4p_{k+1}}^{\frac{p_{k+1}}{\gamma^{k+1} p_0}} \leq C(T)$ which is independent on $p_0$.

To see the middle factors, from \eqref{eq gamma k}, note that
\[
\begin{split}
 0 < \frac{2p_{k+1}}{\gamma} =& 2\gamma^{k} p_0 - (\gamma^{k} + \gamma^{k-1} + \ldots + 1)\leq \gamma^{-1}\\
\Longleftrightarrow \gamma^{-1} < \frac{2p_k}{\gamma} =& 2 \gamma^{k-1} p_0 - (\gamma^{k-1} + \ldots + 1) \leq \gamma^{-1} + \gamma^{-2}\\
\Longleftrightarrow \gamma^{-1} + \gamma^{-2} < \frac{2p_{k-1}}{\gamma} =& 2 \gamma^{k-2} p_0 - (\gamma^{k-2} + \ldots + 1) \leq \gamma^{-1} + \gamma^{-2} + \gamma^{-3}.
\end{split}
\]
For simplicity, denote $\mu = \gamma^{-1}$.
Then for any $i=0,1, \ldots, k$,
\[
1 + \mu + \ldots + \mu^{k-i} < 2p_i \leq 1 + \mu + \ldots + \mu^{k-i} + \mu^{k+1-i}.
\]
Then
\[
\begin{split}
     \prod_{i=0}^{k} (2p_i)^{\frac{1}{2 \gamma^i p_0}} \leq&  \prod_{i=0}^{k} \left( 1 + \ldots + \mu^{k+1-i} \right)^{\frac{1}{2 \gamma^{i} p_0}} < \prod_{i=0}^{k} \left(\frac{1}{\mu-1} \mu^{k+2-i} \right)^{\frac{1}{2 \gamma^{i} p_0}}\\
     <& \left(\frac{1}{\mu-1} \right) \left(\prod_{i=0}^{k} \mu^{\frac{k+2-i}{\gamma^i}} \right)^{\frac{1}{2p_0}}.
\end{split}
\]
The product becomes sum in the exponent, which is
\[
\begin{split}
\sum_{i=0}^{k} \frac{k+2-i}{\gamma^i} =& \left(2 \mu^k + 3 \mu^{k-1} + \ldots + (k+2) \cdot1 \right)\\
=& 2\left( \mu^k + \mu^{k-1} + \ldots + 1 \right) + \left( \mu^{k-1} + \mu^{k-2} + \ldots + 1\right)\\
& + \ldots + (\mu + 1) + 1\\
<& 2 \frac{\mu}{\mu-1} \mu^{k} + \frac{\mu}{\mu-1} \mu^{k-1} + \ldots + \frac{\mu}{\mu-1} \mu + \frac{\mu}{\mu-1}\\
<& \frac{\mu}{\mu-1} \left( \mu^{k} + \frac{\mu}{\mu-1} \mu^{k} \right) = \frac{\mu(2 \mu-1)}{(\mu-1)^2} \mu^{k}.
\end{split}
\]
Hence, using \eqref{eq gamma k}, we have
\[
\begin{split}
    \prod_{i=0}^{k} (2p_i)^{\frac{1}{2 \gamma^i p_0}} \leq&  \left( \frac{1}{\mu-1}\right) \left( \mu^{\frac{\mu(2\mu-1)}{(\mu-1)^2} \mu^{k}} \right)^{\frac{1}{2p_0}} =\left( \frac{1}{\mu-1}\right) \left( \mu^{\frac{\mu(2\mu-1)}{(\mu-1)^2}} \right)^{\frac{1}{2 \gamma^k p_0}}\\
    \leq& \left( \frac{1}{\mu-1}\right) \left( \mu^{\frac{\mu(2\mu-1)}{(\mu-1)^2}} \right)^{\frac{\gamma + 2(1-\gamma) p_0}{2 p_0}} \leq \left( \frac{1}{\mu-1}\right) \left( \mu^{\frac{\mu(2\mu-1)}{(\mu-1)^2}} \right)^{\frac{\gamma + 2(1-\gamma) p_*}{2 p_*}} < \infty.
\end{split}
\]

In conclusion, we get
\[
U_{4p_0} \leq C(T)
\]
where RHS is independent on $p_0$.
By taking $p_0 \to \infty$, we get
\[
\|u_i\|_{L^{\infty}(M_T)} \leq C(T).
\]

Combining two cases, we conclude that
\[
\begin{split}
\|u_i\|_{L^{\infty}(M_T)} \leq& (32LC_S^2)^{\frac{1}{4 p_1}} \left(1 + \sqrt{4Tr^{-2}} \right)^{\frac{1}{2p_1}} U_0  + C(T)\\
\leq& (32LC_S^2)^{\frac{1}{4p_1}} \left(1 + \sqrt{4Tr^{-2}} \right)^{\frac{1}{2p_1}}C \|u_{i,0}\|_{L^{\infty}(M)} + C(T).
\end{split}
\]



Finally, we will get a uniform estimate independent on $T$.
We first consider the case that $T<2$.
Then we have
\[
\begin{split}
    \sup_{s \in [0,T]} \|u_i(s)\|_{L^{\infty}(M)} \leq& (32LC_S^2)^{\frac{1}{4p_1}} \left(1 + \sqrt{4(2)r^{-2}} \right)^{\frac{1}{2p_1}}C \|u_{i,0}\|_{L^{\infty}(M)} + C (2)\\
    =:& C_5 < \infty.
\end{split}
\]
Next consider the case that $T \geq 2$.
By Lemma \ref{lem unif bound-T}, we have that for any $T' \in (0,T-2)$,
\[
\begin{split}
    \sup_{s \in [T'+1,T'+2]} \|u_i(s)\|_{L^{\infty}(M)} \leq& \sup_{t \in [t_{T'},t_{T'}+2)} \|u_i(t)\|_{L^{\infty}(M)}\\
    \leq& (32LC_S^2)^{\frac{1}{4p_1}} \left(1 + \sqrt{4(2)r^{-2}} \right)^{\frac{1}{2p_1}}C \|u_i(t_{T'})\|_{L^{\infty}(M)} + C (2)\\
    =:& C_5 < \infty.
\end{split}
\]
Remaining estimate $\sup_{s \in [0,1]}  \|u_i(s)\|_{L^{\infty}(M)}$ can be estimated as the first case.
This completes the proof.
\end{proof}

\begin{lem} \label{lem uniq}
(Uniqueness)
Let $u_{i,0} \in W^{2,p}(M)$ and $p>2$.
Then the solution $u_i$ in \ref{thm short time} is unique.
\end{lem}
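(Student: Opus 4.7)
The plan is to reduce to a standard energy estimate on the difference of two solutions and then invoke Gronwall. Suppose $u=(u_i)$ and $\tilde u=(\tilde u_i)$ are two solutions of (\ref{Liouville system flow}) on $M_T$ with the same initial data, both lying in $W^{2,1}_p(M_T)$ for some $p>2$. Set $w_i := u_i-\tilde u_i$; then $w_i(x,0)=0$ and
\[
\partial_t w_i - \Delta w_i \;=\; \sum_{j} a_{ij}\Bigl( \frac{h_j e^{u_j}}{\int h_j e^{u_j}} - \frac{h_j e^{\tilde u_j}}{\int h_j e^{\tilde u_j}}\Bigr).
\]
Since $p>2$, the Sobolev embedding quoted before Proposition \ref{prop BEW} gives $u_i,\tilde u_i\in C^{\alpha,\alpha/2}(M_T)$, so both solutions are uniformly bounded on $M_T$ by some constant $R$. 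The lower bound (\ref{eq h e^v lower}) applies to both $u_j$ and $\tilde u_j$, so the denominators are bounded below by a positive constant.

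Next I would repeat the Lipschitz-type computation already carried out in the proof of Theorem \ref{thm short time} (with $p=1$), namely the bound
\[
\Bigl| h_je^{u_j}\textstyle\int h_je^{\tilde u_j} - h_je^{\tilde u_j}\int h_je^{u_j}\Bigr| \;\leq\; C\bigl( h_je^{u_j}\textstyle\int |e^{u_j}-e^{\tilde u_j}| + |e^{u_j}-e^{\tilde u_j}|\int h_je^{u_j}\bigr),
\]
combined with $|e^{u_j}-e^{\tilde u_j}|\leq e^R |w_j|$, to conclude
\[
\Bigl|\frac{h_j e^{u_j}}{\int h_j e^{u_j}} - \frac{h_j e^{\tilde u_j}}{\int h_j e^{\tilde u_j}}\Bigr| \;\leq\; C_*\Bigl( |w_j| + \int_M |w_j|\Bigr)
\]
for a constant $C_*$ depending on $R$, $M_j$, $\|h_j^{1/q}\|_{L^1}$, and $C_{TM}(\beta)$, but independent of $t$.

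Now multiply the equation for $w_i$ by $w_i$, integrate over $M$, and sum in $i$. The Laplacian term contributes $-\sum_i\int_M|\nabla w_i|^2\leq 0$. On the right-hand side, Cauchy--Schwarz and $\mathrm{vol}(M)=1$ give $\int_M |w_j||w_i|\leq \tfrac12\int_M(w_i^2+w_j^2)$ and $\int_M|w_i|\int_M|w_j|\leq \tfrac12\bigl(\int_Mw_i^2+\int_M w_j^2\bigr)$, so that
\[
\frac{d}{dt}\sum_{i}\int_M w_i^2 \;\leq\; C\sum_{i}\int_M w_i^2,
\]
with $C=C(n,|A|,C_*)$. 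Since $\sum_i\int_M w_i^2(0)=0$, Gronwall's inequality forces $w_i\equiv 0$ on $[0,T]$, proving uniqueness.

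The only mild subtlety is justifying the integration by parts and differentiation in $t$ under the integral at the initial time, since a priori $u_i,\tilde u_i$ are only in $W^{2,1}_p(M_T)\cap C^{\alpha,\alpha/2}(M_T)$ rather than smooth up to $t=0$; however this is standard, since $w_i\in W^{2,1}_p$ with zero initial trace and the identity $\partial_t\int w_i^2=2\int w_i\partial_t w_i$ holds in $L^1_{\text{loc}}(0,T)$, with $\int w_i^2(t)\to 0$ as $t\to 0^+$ by continuity of $w_i$ in $L^2(M)$. This is really the only step one has to check carefully; everything else is an immediate recycling of the estimates already established in Section \ref{sec3}.
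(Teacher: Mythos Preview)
Your proof is correct and follows the same overall strategy as the paper: an $L^2$ energy estimate on the difference $w_i=u_i-\tilde u_i$, a Lipschitz bound on the nonlinear terms driven by a uniform $L^\infty$ bound on the solutions, and Gronwall. The one noteworthy difference is how the $L^\infty$ bound is obtained: the paper invokes the uniform constant $C_5$ from Theorem~\ref{thm unif bound} (the Moser iteration result) and the lower bound \eqref{eq h e^v lower2}, whereas you use the parabolic Sobolev embedding $W^{2,1}_p(M_T)\hookrightarrow C^{\alpha,\alpha/2}(M_T)$ directly. Your route is more self-contained for a uniqueness statement on a fixed interval (it does not require the global-in-time machinery of Section~\ref{sec4}), while the paper's route has the advantage that the constants are already uniform in $T$, which dovetails with the global existence argument that immediately follows. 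Either way the core computation is the same.
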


\begin{proof}
Suppose $u_i, \tilde{u}_i$ are solutions in \ref{thm short time}.
Then $v_i := u_i - \tilde{u}_i$ satisfies
\begin{equation}
    \partial_t v_i = \Delta v_i + (f_i - \tilde{f}_i)
\end{equation}
where
    \[
    f_i = \sum_{j} a_{ij} \left(\frac{h_j e^{u_j}}{\int h_j e^{u_j}} - 1 \right), \quad \tilde{f}_i = \sum_{j} a_{ij} \left(\frac{h_j e^{\tilde{u}_j}}{\int h_j e^{\tilde{u}_j}} - 1 \right)
    \]
and the initial condition $v_i(0) = u_i(0) - \tilde{u}_i(0) = 0$.
As in the proof of Theorem \ref{thm short time}, and using \eqref{eq h e^v lower2} replacing $R$ by $C_5$, we get
\[
\begin{split}
\int_{M}|f_i - \tilde{f}_i|^2 \leq& 2C |A|^2 \sum_{j} \frac{ M_j^{2} e^{2C_5}}{\|h_j\|_{L^{1/q}}^{2}} \int_{M} e^{2C_5} |\tilde{u_j}-u_j|^2 \int_{M} e^{2C_5}\\
\leq& 2C |A|^2 e^{4C_5} \sum_{j} \frac{ M_j^{2} e^{2C_5}}{\|h_j\|_{L^{1/q}}^{2}} \|v_j\|_{L^2(M)}^2\\
\leq& 2C |A|^2 e^{4C_5} \sum_{j} \frac{ M_j^{2} e^{2C_5}}{\|h_j\|_{L^{1/q}}^{2}} \sum_{k}\|v_k(t)\|_{L^2(M)}^2\\
\end{split}
\]
where we use Theorem \ref{thm unif bound}.
Then we have
\[
\begin{split}
    \frac{d}{dt} \frac{1}{2}\sum_{i}\|v_i(t)\|_{L^2(M)}^2 =& \sum_{i} \int_{M} v_i(t) \partial_t v_i(t)\\
    =& \sum_{i} \int_{M} v_i(t) (\Delta v_i(t) + (f_i - \tilde{f}_i))\\
    \leq& -\sum_{i} \int_{M} |\nabla v_i|^2(t) + \frac{1}{2} \sum_{i} \|v_i(t)\|_{L^2(M)}^2 + \frac{1}{2} \sum_{i}\int_{M} |f_i - \tilde{f}_i|^2\\
    \leq&  \left( \frac{1}{2} +  nC |A|^2 e^{4C_5} \sum_{j} \frac{ M_j^{2} e^{2C_5}}{\|h_j\|_{L^{1/q}}^{2}}    \right) \sum_{i} \|v_i(t)\|_{L^2(M)}^2.
\end{split}
\]
Now the function $X(t) = \sum_{i} \|v_i(t)\|_{L^2(M)}^2$ satisfies
\[
X'(t) \leq \beta X(t)
\]
for some constant $\beta>0$ with the initial condition $X(0)=0$.
Hence, by Gronwall's inequality, we get $X(t) \equiv 0$ which implies $u_i \equiv \tilde{u}_i$.

\end{proof}

\begin{thm} \label{thm global exist}
(Global existence)
The initial value problem \eqref{eq IVP2} admits a unique global solution $u \in C([0,\infty),W^{1,2}(M)) \cap C^{\infty}(M \times (0,\infty))$.
\end{thm}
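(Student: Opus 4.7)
The plan is a standard continuation argument that combines the short-time existence of Theorem \ref{thm short time}, the time-uniform $L^{\infty}$ bound of Theorem \ref{thm unif bound}, and the uniqueness of Lemma \ref{lem uniq}. Let $T^{\ast} = \sup\{T>0 : \text{a smooth solution of \eqref{Liouville system flow} exists on } M\times [0,T]\}$. Theorem \ref{thm short time} together with the smoothing result at the end of Section 3 gives $T^{\ast} > 0$, and the aim is to prove $T^{\ast} = \infty$.

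Suppose, toward a contradiction, that $T^{\ast} < \infty$. First I would invoke Theorem \ref{thm unif bound}, inspecting its proof to confirm that the bound $\|u_i(t)\|_{L^{\infty}(M)} \leq C_5$ holds on all of $[0, T^{\ast})$ with $C_5$ depending only on the ambient data ($A$, $h_j$, $M$, $U_0$) and not on how close $t$ is to $T^{\ast}$. Combined with the lower bound $\int h_j e^{u_j}\geq \|h_j^{1/q}\|_{L^1}^q e^{-C_5}$ of \eqref{eq h e^v lower2}, this yields a time-uniform pointwise bound on the source $f_i$ on $M\times [0,T^{\ast})$, and hence uniform $L^p$ bounds on $f_i$ for every $p<\infty$.

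Next I would bootstrap regularity on strips bounded away from $t=0$. Pick $\tau_0\in (0,T^{\ast})$ so that $u_i(\tau_0) \in W^{2,p}(M)$, available from Section 3. Applying Proposition \ref{prop BEW} on $M \times [\tau_0, T^{\ast})$ with the uniform $L^p$ control of $f_i$ gives a uniform $W^{2,1}_p$ bound on $u_i$; the anisotropic Sobolev embedding then upgrades this to a uniform $C^{\alpha, \alpha/2}$ bound. Since $h_j \in C^1(M)$ and $u_j \in C^{\alpha, \alpha/2}$, the Hölder calculation used earlier in Section 3 shows $f_i \in C^{\alpha, \alpha/2}$ with a time-uniform norm, so \eqref{eq sol Holder} promotes the bound to uniform $C^{2+\alpha, 1+\alpha/2}$ on any sub-strip $M\times[\tau_0+\sigma, T^{\ast})$. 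Iterating the Schauder step, using $h_j \in C^1$, then gives uniform $C^{k,\alpha}$ bounds for each $k$ that a single application allows, in particular enough for Arzelà--Ascoli to force $u_i(t) \to u_i(T^{\ast})$ in $C^{2,\alpha'}(M)$ for every $\alpha'<\alpha$ as $t \nearrow T^{\ast}$. Since each $u_i(t)$ has zero mean, so does the limit $u_i(T^{\ast}) \in C^{2,\alpha}(M) \subset W^{2,p}(M)$.

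Finally I would apply Theorem \ref{thm short time} with the initial datum $u(T^{\ast})$ to produce a solution on some interval $[T^{\ast}, T^{\ast}+\delta]$; concatenating with the original solution, and using Lemma \ref{lem uniq} to make sense of the concatenation, contradicts the maximality of $T^{\ast}$, so $T^{\ast} = \infty$. Uniqueness on $[0,\infty)$ is immediate from Lemma \ref{lem uniq} applied on each compact subinterval; continuity $u \in C([0,\infty), W^{1,2}(M))$ follows by gluing the local continuity established at the end of Section 3 at each restart; and smoothness on $M \times (0,\infty)$ is local and therefore inherited from the smoothness obtained at each restart. The step I expect to be the main obstacle is the passage to the limit at $T^{\ast}$: it works only because Theorem \ref{thm unif bound} provides a \emph{time-uniform} $L^{\infty}$ bound, not merely a local-in-time one. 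Without this, the Schauder iteration would produce only $T^{\ast}$-dependent bounds and the short-time-existence increments $\delta$ in \eqref{eq cond T} could shrink to zero as we try to reach $T^{\ast}$, blocking the continuation.
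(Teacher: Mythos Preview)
Your continuation argument is correct and is precisely the ``standard continuation argument'' that the paper invokes in its one-sentence proof of Theorem~\ref{thm global exist}; you have simply written out the details the paper leaves implicit. The only cosmetic difference is that you appeal to Theorem~\ref{thm unif bound} for the time-uniform $L^{\infty}$ bound whereas the paper cites Lemma~\ref{lem unif bound-T} together with the explicit dependence of the short-time interval in \eqref{eq cond T}, but both routes are equivalent here and yours is arguably the more direct one.
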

The notation $u \in C([0,\infty),W^{1,2}(M)) \cap C^{\infty}(M \times (0,\infty))$ means for each $t\in [0,\infty)$, $u(t)\in W^{1,2}(M)$ and $\|u(t)\|_{W^{1,2}(M)}$ is continuous in $t$ and is $C^{\infty}$ in $(0,\infty)\times M$.
\begin{proof}
    It follows from a standard continuation argument using Theorem \ref{thm short time}, Lemma \ref{lem unif bound-T}, Lemma \ref{lem uniq}, and with explicit control of $T$ in \eqref{eq cond T}.
\end{proof}
Theorem \ref{main-thm-1} and Corollary \ref{main-thm-2} follow immediately.

\end{document}